\newtheorem{thm}{Theorem}[section]
\newtheorem{lem}[thm]{Lemma}
\newtheorem{prop}[thm]{Proposition}
\theoremstyle{remark}
\newtheorem{rem}{Remark}[section]
\newcommand{\be}{\begin{equation}}
\newcommand{\ee}{\end{equation}}
\newcommand{\bee}{\begin{equation*}}
\newcommand{\eee}{\end{equation*}}
\newcommand{\bea}{\begin{eqnarray}}
\newcommand{\eea}{\end{eqnarray}}
\newcommand{\Bea}{\begin{eqnarray*}}
\newcommand{\Eea}{\end{eqnarray*}}
\def\CP{{\mathcal P}}
\def\CH{{\mathcal H}}
\def\C{{\mathbb C}}
\def\R{{\mathbb R}}
\def\Z{{\mathbb Z}}
\def\ol{\overline}
\def\ul{\underline}
\newcommand{\wt}{\widetilde}
\newcommand{\wh}{\widehat}
\begin{document}

\title[$L^p$-Integrability and Dimensions]
{ $L^p$-Integrability, Dimensions of Supports of Fourier
transforms and applications}
\author{K. S. Senthil Raani}

\address{Department of Mathematics\\
Indian Institute of Science\\
Bangalore-560 012}

\email{raani@math.iisc.ernet.in}

\keywords{supports of Fourier transform, Hausdorff dimension,
packing measure, Salem sets, Ahlfors-David regular sets, Wiener
Tauberian theorems.}


\begin{abstract}
It is proved that there does not exist any non zero function in
$L^p(\R^n)$ with $1\leq p\leq 2n/\alpha$ if its Fourier transform
is supported by a set of finite packing $\alpha$-measure where
$0<\alpha<n$. It is shown that the assertion fails for
$p>2n/\alpha$. The result is applied to prove $L^p$ Wiener
Tauberian theorems for $\R^n$ and $M(2)$.
\end{abstract}
\thanks{\emph{Mathematics Subject Classification (2010).} Primary: 42B10; Secondary: 37F35, 40E05, 28A78\\}
\maketitle


\section{Introduction}
\setcounter{equation}{0}

A classical result of Wiener \cite{NW} states that the translates
of a function $f \in L^1(\R^n)$ span a dense subset of $L^1(\R^n)$
if and only if the Fourier transform, $\wh{f}$ of $f$ is not zero
at any point on $\R^n$. That is, if $f \in L^1(\R^n)$ and \bee
\wh{f}(t)=\frac{1}{(\sqrt{2\pi})^n}\int_{\R^n}f(x)e^{-ix.t}dx,
\eee then for $^xf(y)=f(-x+y)$, we have $\ol{span\ \{^xf:
x\in\R^n\}}=L^1(\R^n)$ if and only if $\wh{f}(t)\neq0\ \forall\
t\in\R^n$. In fact, if $g\in L^{\infty}(\R^n)$ is such that
$\int_{\R^n}\ ^xf(y)g(y)dy=0\ \forall\ x\in\R^n$, we get
$\wt{f}*g=0$ where $\wt{f}(t)=f(-t)$. Distribution theory tells us
that $supp\ \wh{\wt{g}}\subseteq\{x\in\R^n:\wh{f}(x)=0\}$ (which
is Wiener Tauberian theorem in disguise. See \cite{R}). If
$\wh{f}$ is nowhere vanishing then it follows that $g\equiv0$.
This crucial step in the proof of Wiener's theorem leads us to the
study of functions
$f$ in $L^p(\R^n)$ with $supp\ \wh{f}$ in a thin set.\\

    This question also arises in PDE. If $u$ is a tempered solution of the
equation $P(D)u=0$, where $P(D)$ is a constant coefficient
differential operator then the Fourier transform $\hat{u}$ is
supported in the zero set of the polynomial $P$. Hormander and
Agmon studied the asymptotic properties of $u$ \cite{HA}. M. L.
Agranovsky and E. K. Narayanan in \cite{NA} proved that if $f\in
L^p(\R^n)$ and $supp\ \wh{f}$ is carried by a $C^1$-manifold of
dimension $d<n$, then $f\equiv0$ if $p\leq\frac{2n}{d}$. Notice
that in these results and other ones of similar nature, only the
range $p > 2$ is interesting. If $f \in L^p(\mathbb R^n), 1 \leq p
\leq 2$ and $ \widehat{f}$ is supported in a set of measure zero
then $f$ is identically zero. If $p > 2,$ then $\widehat{f}$ is a
tempered distribution and the support of
$f$ is a closed set which may be thin.\\

Our aim in the first part of this paper is to extend the above
results where the integer dimension manifold set is replaced with
finite packing measurable set (see (\ref{EqnIntro1})). We also
mention that an older result of Beurling (see \cite{B}) says that
if $f \in L^p(\mathbb R), p
>2 $ and $\hat{f}$ is supported by a set of Hausdorff dimension less than $2/p$, then the function is identically zero. We show that \emph{if
$f\in L^p(\R^n)$ and $supp\ \wh{f}$ is contained in a set $E$,
which has a finite packing $\alpha$-measure (for $0<\alpha<n$),
then $f\equiv0$ if $p\leq\frac{2n}{\alpha}$.} By considering the
cartesian product of the Salem set in $\R$(see \cite{Sa} and also
page 263 in \cite{W}), we show that our result is sharp.\\

    In the second part of the paper we use the above result to prove some $L^p$-Wiener Tauberian theorems.
N. Wiener \cite{NW} characterized the cyclic vectors (with respect
to translations) in $L^p(\R)$, for $p = 1, 2$, in terms of the
zero set of the Fourier transform. He conjectured that a similar
characterization should be true for $1 < p < 2$(See page 93 in
\cite{NW}). Nir Lev and Alexander Olevskii in \cite{L} recently
proved that for any $1 < p < 2$ one can find two functions in
$L^1(\R)\cap C_0(\R)$, such that one is cyclic in $L^p(\R)$ and
the other is not, but their Fourier transforms have the same
(compact) set of zeros. This disproves Wiener's conjecture. As is
well known, there are no complete answers to
$L^p$-Weiner-Tauberian theorems when $p\neq1,2$. See pages 234-236
in \cite{W} for initial results and \cite{L} for
more references.\\

    In \cite{B}, A. Beurling proved that if the
Hausdorff dimension of the closed set where the Fourier transform
of $f$ vanishes, is $\alpha$, for $0\leq\alpha\leq1$, then the
space of finite linear combinations of translates of $f$ is dense
in $L^p(\R)$ for $2/(2-\alpha)<p$. Now using our result, we prove
a similar result (including the end points for the range) on
$\R^n$ where sets of Hausdorff dimension are replaced with the
sets of finite packing $\alpha-$measure. C. S Herz studied some
versions of $L^p$- Wiener Tauberian theorems and gave alternative
sufficient conditions for the translates of $f\in L^1\cap
L^p(\R^n)$ to span $L^p(\R^n)$ (See \cite{He}). With an additional
hypothesis on the zero sets of Fourier transform of $f$, we
improve his result. In \cite{RS}, Rawat and Sitaram initiated the
study of $L^p$-versions of the Wiener Tauberian theorem under the
action of motion group $M(n)$ on $\R^n$.
We shall show that some of the
results proved in \cite{RS} can be improved using our result. Finally we
take up $L^p$-Wiener Tauberian theorem on the
Euclidean motion group $M(2)$.\\

In the remaining of this section we recall certain definitions
from Fractal geometry (See \cite{C} and \cite{P}). In the second
section we prove the above mentioned result on the
$L^p$-integrability and dimension of the support of $\wh{f}$ and
its sharpness. Finally in the third section we look at
applications of the results proved in section 2 to $L^p$-Wiener Tauberian theorems on $\R^n$ and the Euclidean motion group $M(2)$.\\

    Let $\CH_{\alpha}$ denote the Hausdorff $\alpha$-dimensional outer
measure. Let $E$ be a non-empty bounded subset of $\R^n$. The
\textbf{$\epsilon$-covering number} of $E$, $N(E,\epsilon)$, is
the smallest number of open balls of radius $\epsilon$ needed to
cover $E$. The \textbf{$\epsilon$-packing number} of $E$,
$P(E,\epsilon$), is the largest number of \textbf{disjoint} open
balls of radius $\epsilon$ with centres in $E$. The
\textbf{$\epsilon$-packing} of $E$ is any collection of disjoint
balls $\{B_{r_k}(x_k)\}_k$ with centres $x_k\in E$ and radii
satisfying $0<r_k\leq\epsilon/2$. Let $0\leq s<\infty$. For
$0<\epsilon<1$ and $A\subset\R^n$, put
$$P_{\epsilon}^s(A) = sup\{\sum_k(2r_k)^s\}$$
where the supremum is taken over all permissible
$\epsilon$-packings, $\{B_{r_k}(x_k)\}_k$ of $A$. Then
$P_{\epsilon}^s(A)$ is non-decreasing with respect to $\epsilon$
and we set the \textbf{packing pre measure}, $P^s_0$ as
$$P^s_0(A) = \underset{\epsilon\downarrow0}{lim}\ P_{\epsilon}^s(A).$$
We have $P^s_0(\emptyset)=0$, $P^s_0$ is monotonic and finitely
subadditive, but not countably subadditive. The \textbf{packing
$s-$ measure} of $A$, $\CP^s(A)$ is defined as
  \begin{equation}
    \CP^s(A)=inf\Big\{\sum_{i=1}^{\infty}P_0^s(A_i):A=\cup_{i=1}^{\infty}A_i\Big\}.\label{EqnIntro1}
  \end{equation}
Then $\CP^s$ is Borel regular (Theorem 3.11 in \cite{C}). If $\nu$
is a measure, the \textbf{$\alpha$-upper density of} $\nu$ at $x$,
$\ol{D^{\alpha}}(\nu,x)$ is defined as
$$\ol{D^{\alpha}}(\nu,x)\ =\ \underset{r\rightarrow0}{limsup}\ (2r)^{-\alpha}\nu(B_r(x)),$$
 where $B_r(x)$ is a ball of radius $r$ with centre $x$. Similarly \textbf{$\alpha$-lower density} of $\nu$ at x,
  $\ul{D^{\alpha}}(\nu,x)$ is defined using $liminf$. Let $\alpha<n$. A set $E\subset\R^n$ is said to be
\textbf{Ahlfors-David regular $\alpha$-set} if there exists $a,b$
(both $>0$) in $\mathbb{R}$ such that
$$0<ar^{\alpha}\leq\CH_{\alpha}(E\cap B_r(x))\leq
br^{\alpha}<\infty$$ for all $x\in E$ and $0<r\leq1$.
For all these definitions and similar ones we refer to \cite{C} and \cite{P}.\\


\section{Dimensions of supports of Fourier transforms}
\setcounter{equation}{0}

In this section we relate the dimension of the support of the
Fourier transform of a function with its membership in $L^p$. In
the following lemma, we recall some needed results (see pages
78-89 in \cite{P}). For a non-empty subset $A$ of $\R^n$, let
$A(\epsilon)=\{x\in\R^n\ : d(x,A)<\epsilon\}$.

\begin{lem}\label{lemPM}
Fix $\epsilon>0$. Let $|A(\epsilon)|$ denote the Lebesgue measure of $A(\epsilon)$, where $A$
is a non-empty bounded subset of $\R^n$. Then,
\begin{enumerate}
\item $N(A,2\epsilon)\leq P(A,\epsilon) \leq N(A,\epsilon/2)$,\\
\item $\Omega_nP(A,\epsilon)\epsilon^n\leq |A(\epsilon)|\leq
\Omega_nN(A,\epsilon)(2\epsilon)^n$, where $\Omega_n$ denotes the volume of the unit ball in $\R^n$,\\
\item For $0\leq s<\infty,\ P(A,\epsilon/2)\epsilon^{s}\leq P_{\epsilon}^{s}(A)$,\\
\item Let $B\subset \R^n$ be such that $\CH_{\alpha}(B)<\infty$. Then
 $$2^{-\alpha}\leq\overline{D^{\alpha}}(\mu,x)\leq1$$
 for $\CH_{\alpha}$ almost all $x\in B$, where $\mu=\CH_{\alpha}|_B$.
\end{enumerate}
\end{lem}
The following lemma is crucial for us.


\begin{lem}\label{lemMY}
Let $0\leq\alpha<n$ and let $E\subset\R^n$ be such that $\CP^{\alpha}(E)<\infty.$ Let $S\subset E$ be bounded and
$S(\epsilon)=\{x\in\R^n: d(x,S)<\epsilon\}$. Then
$$\underset{\epsilon\rightarrow0}{limsup}\ |S(\epsilon)|\epsilon^{\alpha-n}<\infty,$$
 where
$|S(\epsilon)|$ denotes the Lebesgue measure of $S(\epsilon)$.
\end{lem}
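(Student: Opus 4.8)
The plan is to control the Lebesgue measure of the neighbourhood $S(\epsilon)$ by covering numbers, convert covering numbers into packing numbers, recognise the resulting quantity as (a scaled slice of) the packing pre\nobreakdash-measure, and then let $\epsilon\to0$, where the hypothesis $\CP^{\alpha}(E)<\infty$ is used.

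Concretely, I would first apply \lemref{lemPM}(2) to $S$ to get $|S(\epsilon)|\le\Omega_n N(S,\epsilon)(2\epsilon)^n=2^n\Omega_n N(S,\epsilon)\,\epsilon^n$. Next, the left half of \lemref{lemPM}(1), read with $\epsilon$ in place of $2\epsilon$, gives $N(S,\epsilon)\le P(S,\epsilon/2)$, and \lemref{lemPM}(3) with $s=\alpha$ gives $P(S,\epsilon/2)\,\epsilon^{\alpha}\le P^{\alpha}_{\epsilon}(S)$. Multiplying these estimates together yields, for all sufficiently small $\epsilon$,
\[
|S(\epsilon)|\,\epsilon^{\alpha-n}\ \le\ 2^n\Omega_n\,P^{\alpha}_{\epsilon}(S).
\]

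Now I would pass to the limit. Since $\epsilon\mapsto P^{\alpha}_{\epsilon}(S)$ is non-decreasing (as recalled just before (\ref{EqnIntro1})), the right-hand side decreases as $\epsilon\downarrow0$ and converges to the packing pre-measure $P^{\alpha}_{0}(S)$; hence $\limsup_{\epsilon\to0}|S(\epsilon)|\,\epsilon^{\alpha-n}\le 2^n\Omega_n\,P^{\alpha}_{0}(S)$, and the whole lemma reduces to showing $P^{\alpha}_{0}(S)<\infty$. This is where I would feed in that $S\subset E$ is bounded and $\CP^{\alpha}(E)<\infty$: choose a decomposition $E=\bigcup_i E_i$ with $\sum_i P^{\alpha}_{0}(E_i)<\infty$, intersect with $S$ to write $S=\bigcup_i(S\cap E_i)$ with each $P^{\alpha}_{0}(S\cap E_i)\le P^{\alpha}_{0}(E_i)<\infty$ by monotonicity of $P^{\alpha}_{0}$, and then deduce finiteness of $P^{\alpha}_{0}(S)$ from this.

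The step I expect to be the real obstacle is precisely this last deduction. The chain of inequalities above is short and purely formal, relying only on \lemref{lemPM} and the monotonicity of $\epsilon\mapsto P^{\alpha}_{\epsilon}$; the subtle point is that $P^{\alpha}_{0}$ is only finitely subadditive, not countably subadditive, so one cannot simply add the pieces $S\cap E_i$, and genuine care is needed to pass from "$P^{\alpha}_{0}$ finite on each $S\cap E_i$" to "$P^{\alpha}_{0}(S)$ finite". In particular, if the hypothesis is read as finiteness of the packing \emph{pre-}measure $P^{\alpha}_{0}(E)$ itself, then $P^{\alpha}_{0}(S)\le P^{\alpha}_{0}(E)<\infty$ is immediate from monotonicity and the lemma follows at once from the displayed estimate; so the crux of a proof in the stated generality is exactly the reconciliation of $\CP^{\alpha}$ with $P^{\alpha}_{0}$ on the bounded set $S$.
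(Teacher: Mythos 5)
Your three displayed inequalities are exactly the ones the paper uses, but you apply them to $S$ as a whole and thereby reduce the lemma to $P^{\alpha}_{0}(S)<\infty$. That reduction is a dead end, not merely a step ``needing care'': the packing pre-measure of a set depends only on its closure (a packing centred in $\ol{S}$ can be perturbed to one centred in $S$ with an arbitrarily small loss in $\sum(2r_k)^{\alpha}$), so for example $S=E=\mathbb{Q}^n\cap[0,1]^n$ has $\CP^{\alpha}(S)=0$ while $P^{\alpha}_{0}(S)=P^{\alpha}_{0}([0,1]^n)=\infty$ for every $0<\alpha<n$. Hence no argument can pass from $\CP^{\alpha}(E)<\infty$ to $P^{\alpha}_{0}(S)<\infty$, and your proposal as structured cannot be completed. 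The missing idea in the paper's proof is to never estimate $P^{\alpha}_{\epsilon}(S)$ at all: one takes a countable cover $\{A_i\}$ of $S$ with $\sum_iP^{\alpha}_{0}(A_i)<\infty$ (intersected with a large ball so each $A_i$ is bounded), runs your chain on each piece separately to get $\epsilon^{\alpha-n}|A_i(\epsilon)|\le C_nP^{\alpha}_{\epsilon}(A_i)$, and then sums using $S(\epsilon)\subseteq\bigcup_iA_i(\epsilon)$ and the countable subadditivity of \emph{Lebesgue} measure. The subadditivity that $P^{\alpha}_{0}$ lacks is thus supplied by $|\cdot|$; that is the whole point of working with the decomposition rather than with $S$ itself.

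That said, your instinct about where the real obstacle lies is sound, and you should be aware that it is not fully dispatched by the paper either. The paper's last step asserts $\limsup_{\epsilon\to0}\sum_iP^{\alpha}_{\epsilon}(A_i)\le\sum_iP^{\alpha}_{0}(A_i)$, which interchanges a limit with an infinite sum of terms decreasing in $\epsilon$; this is legitimate only if $\sum_iP^{\alpha}_{\epsilon_0}(A_i)<\infty$ for some $\epsilon_0>0$, and it fails in general since every nonempty $A_i$ contributes $P^{\alpha}_{\epsilon}(A_i)\ge\epsilon^{\alpha}$. Indeed the statement itself is delicate: for $E=S=\{0\}\cup\{1/k:k\ge1\}\subset\R$ and $0<\alpha<1/2$ one has $\CP^{\alpha}(S)=0$, yet $|S(\epsilon)|\ge c\,\epsilon^{1/2}$, so $|S(\epsilon)|\,\epsilon^{\alpha-1}\to\infty$. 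So the correct diagnosis is: your proposal is missing the piecewise application of the estimates (which is the paper's key structural move), and the reconciliation of $\CP^{\alpha}$ with $P^{\alpha}_{0}$ that you singled out is precisely the point at which extra hypotheses (e.g.\ a uniform bound on $P^{\alpha}_{0}$ of bounded pieces, as is available for the Ahlfors--David regular sets of \lemref{lemC}) are genuinely needed.
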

\begin{proof}
Since we have $\CP^{\alpha}(S)\leq\CP^{\alpha}(E)<\infty$, there
exists a countable cover $\{\wt{A_i}\}$ of $S$ such that $\sum
P_0^{\alpha}(\wt{A_i})<\infty$. Let $R>0$ be such that $S\subset
B_R(0)$. Then $\{A_i\}$ also covers $S$, where $A_i=\wt{A_i}\cap
B_R(0)$ is bounded and $\sum P_0^{\alpha}(A_i)\leq\sum
P_0^{\alpha}(\wt{A_i})<\infty$. By Lemma \ref{lemPM}, \Bea
|A_i(\epsilon)| &\leq& \Omega_n(2\epsilon)^{n}N(A_i,\epsilon)\\
                   &\leq& \Omega_n(2\epsilon)^nP(A_i,\epsilon/2)\\
                   &\leq& \Omega_n2^n\epsilon^{n-\alpha}P_{\epsilon}^{\alpha}(A_i).
\Eea Hence $\epsilon^{\alpha-n}|A_i(\epsilon)| \leq
C_{n}P^{\alpha}_{\epsilon}(A_i)$ for some fixed constant $C_n$. We
also have $|S(\epsilon)|\leq\sum|A_i(\epsilon)|$. Hence,
$\epsilon^{\alpha-n}|S(\epsilon)|\leq C_n\sum
P_{\epsilon}^{\alpha}(A_i).$ So, \bee
\underset{\epsilon\rightarrow0}{limsup}\
\epsilon^{\alpha-n}|S(\epsilon)| \leq C_n\sum P_0^{\alpha}(A_i) <
  \infty.
  \eee
Hence we have $\epsilon^{\alpha-n}|S(\epsilon)|$ tending to a finite limit as
  $\epsilon\rightarrow0$.
\end{proof}


\begin{thm}\label{thmMY}
Let $f\in L^{p}(\mathbb{R}^n)$ be such that $supp\ \wh{f}$ is contained in a set $E$ where
$\CP^{\alpha}(E)<\infty$ for some $0\leq\alpha< n$. Then $f\equiv0$, provided $p\leq\frac{2n}{\alpha}$.\\
\end{thm}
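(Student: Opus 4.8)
The plan is to reduce everything to an $L^2$ statement via interpolation, so that the key case becomes $p=2$, i.e.\ $\wh f\in L^2$ supported in a set of finite packing $\alpha$-measure with $\alpha<n$, which forces $\wh f=0$ by a density/averaging argument. First I would handle the easy range $1\le p\le 2$: here $\wh f$ is a genuine $L^{p'}$ function (with $p'\ge 2$), in particular locally integrable, and its support $E$ has Lebesgue measure zero since $\CP^\alpha(E)<\infty$ with $\alpha<n$ (finite packing $\alpha$-measure implies finite, hence zero, $n$-dimensional content on bounded pieces; more directly, Lemma~\ref{lemMY} gives $|S(\eps)|\to 0$ for bounded $S\subset E$, so $|E|=0$). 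A function in $L^1_{loc}$ supported on a null set is zero a.e., so $f\equiv 0$. The real content is the range $2<p\le 2n/\alpha$.

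For $2<p\le 2n/\alpha$, the idea is a local smoothing/averaging estimate combined with the thinness of $E$ supplied by Lemma~\ref{lemMY}. Fix a bounded piece: replace $f$ by $f_\chi$ with $\wh{f_\chi}=\wh f\cdot\chi$ for a smooth cutoff $\chi$ supported near a bounded portion $S=E\cap B_R(0)$ of $E$; it suffices to show every such $f_\chi$ vanishes. Let $\phi_\eps$ be an approximate identity with $\wh{\phi_\eps}$ supported in a ball of radius $\eps$ and $\|\phi_\eps\|_\infty\lesssim \eps^{-n}$ (e.g.\ a dilated Schwartz bump), and consider $g_\eps := f_\chi * \phi_\eps$. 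On the Fourier side $\wh{g_\eps}=\wh{f_\chi}\,\wh{\phi_\eps}$ is supported in $S(\eps)$, the $\eps$-neighbourhood of $S$. Now estimate $\|g_\eps\|_{p'}$ two ways. On one hand, by Young/Hausdorff–Young and the support condition,
\[
\|g_\eps\|_{p'}\;\lesssim\;\|\wh{g_\eps}\|_{p}\;=\;\Big(\int_{S(\eps)}|\wh{f_\chi}\wh{\phi_\eps}|^{p}\Big)^{1/p}\;\le\;\|\wh{\phi_\eps}\|_\infty\,|S(\eps)|^{1/p-1/2}\,\|\wh{f_\chi}\|_{2},
\]
using Hölder on $S(\eps)$ with exponents $2/p$ and its conjugate (valid since $p\ge 2$, $\wh{f_\chi}\in L^2$). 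On the other hand $g_\eps\to f_\chi$ in $L^{p'}$ (as $\phi_\eps$ is an approximate identity and $f_\chi\in L^{p'}$, since $f\in L^p$ localized in frequency is in all $L^q$, $q\ge p$ — or argue directly that $f_\chi\in L^2\cap L^p\subset L^{p'}$). Since $\|\wh{\phi_\eps}\|_\infty\le C$ is bounded, the decisive quantity is $|S(\eps)|^{1/p-1/2}$. By Lemma~\ref{lemMY}, $|S(\eps)|\lesssim \eps^{\,n-\alpha}$, so $|S(\eps)|^{1/p-1/2}\lesssim \eps^{(n-\alpha)(1/p-1/2)}$. When $p<2n/\alpha$ the exponent $(n-\alpha)(1/p-1/2)$ is strictly positive, whence $\|g_\eps\|_{p'}\to 0$, forcing $\|f_\chi\|_{p'}=0$, i.e.\ $f_\chi\equiv 0$; letting the cutoff exhaust $E$ gives $f\equiv 0$. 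For the endpoint $p=2n/\alpha$ the exponent is exactly $0$, so this crude bound only gives $\|f_\chi\|_{p'}\le C\|\wh{f_\chi}\|_2$, which is not yet a contradiction; here one refines using that $|S(\eps)|\eps^{\alpha-n}$ actually converges (Lemma~\ref{lemMY} asserts a finite limit, not merely $\limsup<\infty$), together with the fact that one may apply the estimate to $f_\chi*\phi_\eps$ for arbitrarily small frequency support and use that $\wh{\phi_\eps}$ can be taken with $L^1$-normalization and $\|\wh{\phi_\eps}\|_\infty\to 0$ is false — instead one localizes $S$ further: cover $S$ by finitely many pieces $S_j$ of small diameter, apply the estimate to each, and sum; the constant in $|S_j(\eps)|\lesssim \eps^{n-\alpha}$ can be made small (proportional to $P_0^\alpha(S_j)$, which is small by countable additivity in the definition \eqref{EqnIntro1}), beating the bounded factor $\|\wh{f_\chi}\|_2$ after summation — this localization argument closes the endpoint.

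The main obstacle is precisely the endpoint $p=2n/\alpha$: the straightforward neighbourhood-volume bound is scale-invariant there and yields no gain, so one must extract the extra smallness from the packing \emph{measure} (infimum over countable covers) rather than the packing pre-measure. The technical heart is therefore a careful decomposition of the support $E$ into countably many pieces $A_i$ with $\sum_i P_0^\alpha(A_i)$ finite and each summand exploited so that the contributions $\sum_i |A_i(\eps)|^{1/p-1/2}$ (suitably combined via Hölder across the pieces, since $1/p-1/2\le 0$ one uses the reverse inequality or an $\ell^1$–$\ell^\infty$ split) still tend to zero, or at least that the total constant can be taken below any prescribed threshold. I expect the cleanest route is to prove the strict-inequality range $p<2n/\alpha$ first by the averaging argument above, and then obtain the endpoint $p=2n/\alpha$ as a limiting consequence, either by the localization refinement just described or by a duality/uniform-boundedness argument showing that membership in $L^{2n/\alpha}$ with the support condition implies membership in $L^{q}$ for some $q<2n/\alpha$, contradicting the strict case unless $f\equiv 0$.
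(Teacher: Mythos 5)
Your overall strategy---mollify so that the relevant object has Fourier support in an $\eps$-neighbourhood $S(\eps)$ of a bounded piece $S$ of $E$, and play an $L^2$ bound against $|S(\eps)|\lesssim\eps^{n-\alpha}$ from Lemma~\ref{lemMY}---is the right one and is close to what the paper does, and your treatment of $1\le p\le 2$ is fine. But the execution for $p>2$ contains several incorrect steps. First, $\wh{f_\chi}\,\wh{\phi_\eps}$ is supported in the \emph{intersection} of the two supports, not in $S(\eps)$; to fatten the support to $S(\eps)$ you must convolve on the Fourier side, i.e.\ take $u_\eps=\wh{f}\ast\chi_\eps$ (equivalently, multiply $f$ by $\wh{\chi}(\eps\,\cdot)$), which is what the paper does. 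Second, the chain $\|g_\eps\|_{p'}\lesssim\|\wh{g_\eps}\|_{p}$ is Hausdorff--Young in the forbidden direction ($p>2$), and the H\"older step $\|F\|_{L^p(S(\eps))}\le|S(\eps)|^{1/p-1/2}\|F\|_{L^2(S(\eps))}$ is the reverse of the true inclusion when $p>2$; moreover $\|\wh{f_\chi}\|_2$ is not known to be finite (that $\wh{f}$ is locally square integrable is essentially what is being proved), and $f_\chi\in L^2\cap L^p\subset L^{p'}$ fails, since $L^2\cap L^p$ with $p>2$ only embeds into $L^q$ for $2\le q\le p$. The correct way to run your computation is to pair $u_\eps$ with $\psi\in C_c^\infty$ and use Cauchy--Schwarz: Plancherel and H\"older give $\|u_\eps\|_2^2\lesssim\eps^{-n(1-2/p)}\|f\|_p^2$, while $\int_{S(\eps)}|\psi|^2\lesssim\eps^{n-\alpha}$, so $|\langle u_\eps,\psi\rangle|^2\lesssim\eps^{2n/p-\alpha}$; this vanishes for $p<2n/\alpha$ and is merely bounded at $p=2n/\alpha$, confirming your diagnosis of where the difficulty lies.

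The genuine gap is the endpoint $p=2n/\alpha$, which is the whole content of the theorem: the paper's first move (convolving $f$ with a $C_c^\infty$ function, which places $f$ in every $L^q$, $q\ge p$, without enlarging $supp\,\wh{f}$) reduces all $p\le 2n/\alpha$ to this single exponent. Neither of your proposed fixes closes it. Refining the cover cannot make the constant small: $\CP^{\alpha}(E)$ is a fixed positive finite number, the pieces $A_i$ in a near-optimal cover need not be separated (so you cannot attach a smooth cutoff to each), and summing the per-piece bounds produces $\sum_i\bigl(P_0^{\alpha}(A_i)\bigr)^{1/2}$ or worse, which is not controlled by $\sum_iP_0^{\alpha}(A_i)$. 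The self-improvement claim ($f\in L^{2n/\alpha}$ with the support condition implies $f\in L^{q}$ for some $q<2n/\alpha$) is unsupported. What actually rescues the endpoint in the paper is qualitative rather than quantitative: writing $\|u_\eps\|_2^2=\int|f|^2|\wh{\chi}(\eps x)|^2\,dx$ and decomposing over the dyadic annuli $2^j\le|\eps x|\le2^{j+1}$ yields $\|u_\eps\|_2^2\le C\eps^{\alpha-n}\sum_ja_jb_j^{\eps}$ with $\sum_j a_j<\infty$ and, for each fixed $j$, $b_j^{\eps}\le C\bigl(\int_{2^j\eps^{-1}\le|x|\le2^{j+1}\eps^{-1}}|f|^p\bigr)^{2/p}\to0$ because these annuli escape to infinity and $f$ is a \emph{fixed} function in $L^p$; dominated convergence then upgrades the $O(\eps^{\alpha-n})$ bound to $o(\eps^{\alpha-n})$, which is exactly the gain needed to beat $|S(\eps)|\lesssim\eps^{n-\alpha}$ at the endpoint. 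This dominated-convergence mechanism is the missing idea in your proposal.
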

\begin{proof} For the proof we closely follow the arguments in
\cite{HA} (See page 174 of \cite{H}). By convolving $f$ with a
compactly supported smooth function we can assume that $f\in
L^{p}(\R^{n})$ where $p=2n/\alpha$. Choose an even function $ \chi
\in C_{c}^{\infty} (R^{n}) $ with support in unit ball and
$\int_{\mathbb{R}^{n}}\chi(x) dx = 1$. Let $\chi_{\epsilon}(x) =
\epsilon^{-n}\chi(x/\epsilon)$ and
$u_{\epsilon}=u\ast\chi_{\epsilon}$ where $u=\wh{f}$. Then by the
Plancherel theorem,
  \Bea
    \|u_{\epsilon}\|^{2} &=& \int_{\mathbb{R}^{n}}|f(x)|^{2}|\wh{\chi}(\epsilon x)|^{2} dx \\
     &\leqslant&  C\epsilon^{\alpha-n} \sum_{j=-\infty}^{\infty} 2^{j(n-\alpha)} \sup_{2^{j}\leqslant|\epsilon x|\leqslant2^{j+1}}|\wh{\chi}_{\epsilon}(x)|^2(2^{-j}\epsilon)^{n-\alpha}\underset{2^{j}\leqslant|\epsilon x|\leqslant2^{j+1}}{\int}|f(x)|^{2}dx \\
      &=&C\epsilon^{\alpha-n}\sum_{j=-\infty}^{\infty}
      a_{j}b_{j}^{\epsilon},
  \Eea
  where
  \bee
    a_{j}=2^{j(n-\alpha)} \sup_{2^{j}\leqslant|x|\leqslant2^{j+1}}
    |\wh{\chi}(x)|^{2},
  \eee
  and
  \bee
    b_{j}^{\epsilon}=(2^{-j}\epsilon)^{n-\alpha}\int_{2^{j}\leqslant|\epsilon x|\leqslant2^{j+1}}|f(x)|^{2}dx.
  \eee
  Applying Holder's inequality,
  \begin{equation*}
    |b_{j}^{\epsilon}|\leqslant
    C\left(\int_{2^{j}\epsilon^{-1}\leqslant|x|\leqslant2^{j+1}\epsilon^{-1}}|f(x)|^{p}dx\right)^{2/p},
  \end{equation*}
  which goes to zero as $\epsilon \rightarrow 0$, for any fixed j. Also we have $|b^{\epsilon}_{j}|\leqslant C\|f\|^{2}_{p}<\infty$ for some constant $C$ independent of $\epsilon$ and $j$. Since $ \sum_j|a_{j}| $ is finite, by the dominated convergence theorem, we have $\sum_ja_{j}b_{j}^{\epsilon} \rightarrow 0$ as $ \epsilon \rightarrow 0$.\\

  Let $\psi\in C_c^{\infty}(\mathbb{R}^n)$. Let $S=supp\ \wh{f}\cap supp\ \psi$. Then $S$ is a bounded subset of $E$. By Lemma \ref{lemMY}, we have $\epsilon^{\alpha-n}|S_{\epsilon}|$ tending to a finite limit as
  $\epsilon\rightarrow0$. So,
  \begin{eqnarray*}
    |<u,\psi>|^2 &=& \underset{\epsilon\rightarrow0}{lim}\ |<u_{\epsilon},\psi>|^2 \\
      &\leq& \underset{\epsilon\rightarrow0}{lim}\ \|u_{\epsilon}\|_2^2\int_{S_{\epsilon}}|\psi|^2 \\
      &\leq& c\|\psi\|_{\infty}^2\ \underset{\epsilon\rightarrow0}{lim}\ \epsilon^{\alpha-n}|S_{\epsilon}|\sum_{j=-\infty}^{\infty} a_{j}b_{j}^{\epsilon} \\
      &=&0
  \end{eqnarray*}
 Hence $f=0$.   \\
\end{proof}


\begin{rem}
For an integer $0\leq d<n$, any $d$-dimensional smooth manifold in
$\R^n$ has both Hausdorff and Packing dimension as $d$. (See page
56 and 85 in \cite{P}) Hence the above Theorem \ref{thmMY} extends Theorem
1 in \cite{NA}.
\end{rem}

\begin{lem}\label{lemC}
Let $E\subset\R^n$ be such that $0<\CH_{\alpha}(E)<\infty$. Assume that
there exists constants $0<a<\infty$ and $r_{\alpha}>0$ such that
$ar^{\alpha}\leq\CH_{\alpha}(E\cap B_r(x))$ for all $x\in E$ and
for all $r<r_{\alpha}$. Then $\CP^{\alpha}(E)<\infty$.
\end{lem}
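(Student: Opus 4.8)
The plan is to derive an upper bound on the packing pre-measure $P_0^{\alpha}(E)$ directly from the lower regularity estimate $ar^{\alpha}\leq\CH_{\alpha}(E\cap B_r(x))$, using the fact that $\CH_{\alpha}(E)<\infty$ as the ``budget'' that limits how many disjoint balls can be packed. Concretely, fix $\epsilon$ with $0<\epsilon<r_{\alpha}$ and let $\{B_{r_k}(x_k)\}_k$ be any permissible $\epsilon$-packing of $E$, so the balls are pairwise disjoint, $x_k\in E$, and $0<r_k\leq\epsilon/2<r_{\alpha}$. The sets $E\cap B_{r_k}(x_k)$ are then pairwise disjoint subsets of $E$, so by countable additivity of $\CH_{\alpha}$ on Borel sets,
\[
\sum_k \CH_{\alpha}\bigl(E\cap B_{r_k}(x_k)\bigr)\;=\;\CH_{\alpha}\Bigl(\bigcup_k \bigl(E\cap B_{r_k}(x_k)\bigr)\Bigr)\;\leq\;\CH_{\alpha}(E)\;<\;\infty.
\]
Applying the hypothesis $ar_k^{\alpha}\leq\CH_{\alpha}(E\cap B_{r_k}(x_k))$ to each term gives $a\sum_k r_k^{\alpha}\leq\CH_{\alpha}(E)$, hence $\sum_k(2r_k)^{\alpha}=2^{\alpha}\sum_k r_k^{\alpha}\leq 2^{\alpha}a^{-1}\CH_{\alpha}(E)$.

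Next I would take the supremum over all permissible $\epsilon$-packings to conclude $P_{\epsilon}^{\alpha}(E)\leq 2^{\alpha}a^{-1}\CH_{\alpha}(E)$ for every $\epsilon<r_{\alpha}$, and then let $\epsilon\downarrow 0$ to obtain $P_0^{\alpha}(E)\leq 2^{\alpha}a^{-1}\CH_{\alpha}(E)<\infty$. Finally, since $\CP^{\alpha}(E)$ is defined as an infimum over countable decompositions $E=\bigcup_i A_i$ of $\sum_i P_0^{\alpha}(A_i)$, the trivial decomposition with a single piece $A_1=E$ already yields $\CP^{\alpha}(E)\leq P_0^{\alpha}(E)<\infty$, which is exactly the claim.

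I expect the only genuine subtlety to be bookkeeping about the definition of a permissible $\epsilon$-packing versus the $\epsilon$-packing number $P(E,\epsilon)$: the pre-measure $P_{\epsilon}^{\alpha}$ is defined using packings by disjoint balls of radius at most $\epsilon/2$ centered in $E$, and one must make sure the regularity hypothesis is being applied at radii $r_k<r_{\alpha}$, which is guaranteed once $\epsilon\leq 2r_{\alpha}$ (or $\epsilon<r_{\alpha}$, to be safe). A second minor point is the use of countable additivity of $\CH_{\alpha}$ on the disjoint Borel sets $E\cap B_{r_k}(x_k)$; this is standard since $\CH_{\alpha}$ is a Borel (indeed Borel regular) measure and the balls are open. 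No delicate covering or density argument is needed here — the content of the lemma is essentially that one-sided Ahlfors regularity from below, together with finite Hausdorff measure, controls packings, and the proof is a short volume-counting argument.
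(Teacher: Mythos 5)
Your proof is correct, but it takes a genuinely different and more self-contained route than the paper. The paper defines the restricted measure $\mu_{\alpha}(A)=\CH_{\alpha}(E\cap A)$, observes that the hypothesis forces the lower $\alpha$-density $\ul{D^{\alpha}}(\mu_{\alpha},x)$ to be bounded below by a positive constant $k$ at every $x\in E$, and then invokes the density theorem for packing measures (Theorem 3.16 in \cite{C}, together with the existence of a Borel hull $B\supseteq E$ with $\CP^{\alpha}(B)=\CP^{\alpha}(E)$) to conclude $\CP^{\alpha}(E)\leq\mu_{\alpha}(B)/k<\infty$. You instead bound an arbitrary permissible $\epsilon$-packing directly: disjointness of the balls plus the lower regularity estimate gives $a\sum_k r_k^{\alpha}\leq\CH_{\alpha}(E)$, hence $P_{\epsilon}^{\alpha}(E)\leq 2^{\alpha}a^{-1}\CH_{\alpha}(E)$ uniformly for small $\epsilon$, and the trivial decomposition $A_1=E$ finishes. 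This avoids the cited density theorem entirely, yields the strictly stronger conclusion that the packing \emph{pre-measure} $P_0^{\alpha}(E)$ is finite, and produces an explicit constant; in effect you have reproved the relevant half of the density theorem in the special case where the lower bound holds uniformly at all radii below $r_{\alpha}$ rather than only in the $\liminf$. One small imprecision: you appeal to ``countable additivity of $\CH_{\alpha}$ on Borel sets,'' but $E\cap B_{r_k}(x_k)$ need not be Borel if $E$ is not; the inequality $\sum_k\CH_{\alpha}(E\cap B_{r_k}(x_k))\leq\CH_{\alpha}(E)$ nevertheless holds because the open balls are pairwise disjoint and Carath\'eodory-measurable for the outer measure $\CH_{\alpha}$, so additivity over them applies to arbitrary test sets. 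With that phrasing adjusted, the argument is complete.
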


\begin{proof}
Consider $\mu_{\alpha}(A)=\CH_{\alpha}(E\cap A)$ for all
$A\subseteq\R^n$. Then $\mu_{\alpha}$ is a finite Borel regular
measure on $\R^n$. Since  $ar^{\alpha}\leq\CH_{\alpha}(E\cap
B_r(x))$ for all $x\in E$ and for all $r<r_{\alpha}$, we have
$a\leq\ul{D^{\alpha}}(\mu_{\alpha},x)$ for all $x\in E$. Let
$k=inf_{x\in E}\{\ul{D^{\alpha}}(\mu_{\alpha},x)\}$. We have
$0<a\leq k$. By Lemma \ref{lemPM} (4), we have $k<\infty$. Also
corresponding to each $E\subseteq \R^n$ is a Borel set $B\supseteq
E$ such that $\CP^{\alpha}(B)= \CP^{\alpha}(E)$. (See Theorem
3.11(c) in \cite{C}). Then, $\mu_{\alpha}(B)=\CH_{\alpha}(E\cap
B)<\infty$. By Theorem 3.16 in \cite{C}, we have
$\CP^{\alpha}(E)=\CP^{\alpha}(B)\leq\mu_{\alpha}(B)/k<\infty$.
\end{proof}

\begin{rem}\label{RemAhlfors}
Ahlfors-David regular $\alpha$-sets satisfy the hypothesis of the
above lemma.
\end{rem}


Next we show that Theorem \ref{thmMY} is sharp. First, let us
recall a well known example due to Salem which shows that there
exists a measure $\nu$ supported on a Cantor type set
$K\subseteq\R$, of Hausdorff dimension $\beta,\ 0<\beta<1$ with
Fourier tranform $\wh{\nu}$ belonging to $L^q(\R)$ for all
$q>2/\beta$ (See \cite{Sa} and page 263-271 in \cite{W}). Let
$M=K\times K\times...\times K$ ($n$ times) and
$\mu=\nu\times\nu\times...\times\nu$ ($n$ times). Then $\mu$ is
supported in $M$ and $\wh{\mu}\in L^q(\R^n)$ for
$q>\frac{2}{\beta}=\frac{2n}{\alpha}$ where
 $\alpha=n\beta$. Closely following the proof in \cite{W} (page 33)
 we show that not only the Hausdorff dimension of $M$ is $\alpha$,
 but $M$ also satisfies the hypothesis of the above Lemma \ref{lemC}
 and thus proving that the range in Theorem \ref{thmMY} is the best possible.\\

First, we briefly recall how the above set $K\subseteq\R$ is
constructed. Choose a positive number $\eta$ and an integer $N$ so
that $N\eta<1$ and \begin{equation}
N\eta^{\beta}=1.\label{Eqn25LB2}
\end{equation} Choose $N$
independent points $a_i$ in the unit interval $[0,1]$ in such a
way that $0\leq a_1<a_2<...<a_N\leq1-\eta$ and widely enough
spaced so that the distance between two $a_i$ is larger than
$\eta$. The set $K$ is constructed as the intersection of
decreasing sequence of compact
sets $K_j$, where $K_j$'s are defined as follows:\\

 Choose an increasing sequence of non-zero positive
numbers $\eta_j$ converging to $\eta$ where
\begin{equation}
\eta(1-\frac{1}{(j+1)^2})\leq\eta_j\leq\eta \label{Eqn25LB1}
\end{equation} for all $j$. The first set, $K_1$,
is the union of $N$ intervals of length $\eta_1$ of the form
$[a_k,a_k+\eta_1]$. The second set $K_2$, has $N^2$ intervals of
length $\eta_1\eta_2$ of the form
$[a_i+a_j\eta_1,a_i+a_j\eta_1+\eta_1\eta_2]$ and so on.
Inductively, we obtain a sequence $K_j$ of decreasing sets of
length $\eta_1\eta_2...\eta_j$. Then $K=\cap_{j}K_j$. It
is known that the Hausdorff dimension of $K$ is $\beta$. (see \cite{Sa} and page 268 in \cite{W})\\

\begin{lem}\label{lemEX}
Hausdorff dimension of $M=K\times K\times..\times K$ ($n$ times)
equals $\alpha=n\beta$ and $0\leq\CP^{\alpha}(M)<\infty$.
\end{lem}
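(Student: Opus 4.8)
The plan is to verify the hypotheses of Lemma~\ref{lemC} for $E=M$. Once we establish $0<\CH_{\alpha}(M)<\infty$ and the existence of constants $a,r_{\alpha}>0$ with $\CH_{\alpha}(M\cap B_r(x))\ge a r^{\alpha}$ for all $x\in M$ and $0<r<r_{\alpha}$, Lemma~\ref{lemC} immediately gives $\CP^{\alpha}(M)<\infty$ (and $\CP^{\alpha}(M)\ge0$ is trivial); moreover $0<\CH_{\alpha}(M)<\infty$ forces $\dim_{H}M=\alpha=n\beta$. The only genuinely delicate step will be a one-dimensional Frostman bound for the natural measure on $K$, with a constant uniform over all scales; the rest is bookkeeping with the parameters of the construction.

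Write $\ell_{j}=\eta_{1}\eta_{2}\cdots\eta_{j}$ (with $\ell_{0}=1$) for the common length of the $N^{j}$ intervals composing $K_{j}$. The relations $\eta_{i}\le\eta$ give $\ell_{j}\le\eta^{j}$, while $\eta_{i}\ge\eta(1-(i+1)^{-2})$ together with the telescoping identity $\prod_{i=1}^{j}(1-(i+1)^{-2})=\frac{j+2}{2(j+1)}\ge\frac12$ give $\ell_{j}\ge\frac12\eta^{j}$; since in addition $\eta^{\beta}=N^{-1}$, the quantities $\ell_{j}^{n\beta}$ and $N^{-nj}$ are comparable uniformly in $j$. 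Let $\nu$ be the natural probability measure on $K$, the one with $\nu(J)=N^{-j}$ for every one of the $N^{j}$ intervals $J$ of $K_{j}$, and put $\mu=\nu\times\cdots\times\nu$ ($n$ copies), a probability measure carried by $M$. The key estimate is $\nu(I)\le C|I|^{\beta}$ for every interval $I$. To prove it I would take $|I|<1$, pick $j\ge1$ with $\ell_{j}\le|I|<\ell_{j-1}$, and observe that the $N^{j}$ intervals of $K_{j}$ are pairwise disjoint of length $\ell_{j}$, so each one meeting $I$ lies in the $\ell_{j}$-neighbourhood of $I$; hence at most $|I|/\ell_{j}+2\le\ell_{j-1}/\ell_{j}+2=\eta_{j}^{-1}+2\le C_{0}$ of them meet $I$ (using $\eta_{j}\ge\eta_{1}\ge\frac34\eta$), which gives $\nu(I)\le C_{0}N^{-j}=C_{0}(\eta^{j})^{\beta}\le C_{0}(2|I|)^{\beta}$ since $\eta^{j}\le2\ell_{j}\le2|I|$. (The case $|I|\ge1$ is trivial because $\nu(\R)=1$.)

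Taking products over the $n$ coordinate directions, $\mu(B_{r}(x))\le\mu(\text{cube of side }2r)\le C_{0}^{n}(2r)^{n\beta}=C_{1}r^{\alpha}$ for all $x\in\R^{n}$ and $r>0$; consequently $\mu(U)\le C_{1}(\operatorname{diam}U)^{\alpha}$ for every set $U$ (after enlarging $C_{1}$), and comparing an arbitrary countable cover of $M$ with $\mu$ yields $\CH_{\alpha}(M)\ge\mu(M)/C_{1}=C_{1}^{-1}>0$. On the other hand, for each $j$ the $N^{nj}$ cubes of side $\ell_{j}$ cover $M$, with $N^{nj}(\sqrt n\,\ell_{j})^{\alpha}=n^{\alpha/2}N^{nj}\ell_{j}^{n\beta}\le n^{\alpha/2}N^{nj}\eta^{jn\beta}=n^{\alpha/2}$, so letting $j\to\infty$ gives $\CH_{\alpha}(M)\le n^{\alpha/2}<\infty$. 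Thus $0<\CH_{\alpha}(M)<\infty$, hence $\dim_{H}M=\alpha$. For the Ahlfors-type lower bound, fix $x\in M$ and $0<r<r_{\alpha}:=\sqrt n$, and let $j\ge1$ be the least index with $\sqrt n\,\ell_{j}\le r$; by the choice of $r_{\alpha}$ one has $\sqrt n\,\ell_{j-1}>r$. The stage-$j$ cube $Q$ containing $x$ has diameter $\sqrt n\,\ell_{j}\le r$, so $Q\subseteq\overline{B_{r}(x)}$ and $\mu(\overline{B_{r}(x)})\ge\mu(Q)=N^{-nj}$; and from $\eta^{j}\ge\ell_{j}=\eta_{j}\ell_{j-1}\ge\frac34\eta\cdot r/\sqrt n$ we get $N^{-nj}=(\eta^{j})^{n\beta}\ge c\,r^{\alpha}$ with $c=(3\eta/4\sqrt n)^{\alpha}$. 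Applying $\CH_{\alpha}(A)\ge\mu(A)/C_{1}$ to $A=M\cap\overline{B_{r/2}(x)}$ and using the Frostman bound once more, $\CH_{\alpha}(M\cap B_{r}(x))\ge\CH_{\alpha}(M\cap\overline{B_{r/2}(x)})\ge\mu(\overline{B_{r/2}(x)})/C_{1}\ge(c/C_{1})(r/2)^{\alpha}=a\,r^{\alpha}$. With $0<\CH_{\alpha}(M)<\infty$ and this estimate, Lemma~\ref{lemC} applies and yields $\CP^{\alpha}(M)<\infty$, completing the proof.

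The step to be careful with is the one-dimensional Frostman estimate. The subtlety is that two consecutive intervals of $K_{j}$ sitting inside a common parent interval may be separated only by a gap of order $\ell_{j-1}(\eta-\eta_{j})$, which can be very small since $\eta_{j}$ is allowed to be arbitrarily close to $\eta$; so one cannot argue by separation. What rescues the count is that the $N^{j}$ intervals of $K_{j}$ are nonetheless pairwise disjoint of length exactly $\ell_{j}$, and that the two-sided bound $\eta(1-(i+1)^{-2})\le\eta_{i}\le\eta$ keeps the ratios $\ell_{j-1}/\ell_{j}$ and $\eta^{j}/\ell_{j}$ bounded uniformly in $j$ — which is precisely what makes all the constants above independent of scale.
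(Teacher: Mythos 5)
Your argument is correct, and it reaches the conclusion by a genuinely different route from the paper. The paper follows Salem--Donoghue's direct covering computation: given an arbitrary efficient cover $\{U_i\}$ of $M\cap B_r(x)$, it counts how many constituent cubes of a fine grid $M_j$ each $U_i$ contains, and combines that count with the identities $N\eta^{\beta}=1$ and $\eta(1-(j+1)^{-2})\leq\eta_j\leq\eta$ to bound $\sum_i d_i^{\alpha}$ from below by $CLr^{\alpha}$ directly; this requires tracking the auxiliary quantities $k_i$, $p_{i_m}$, $s_{i_m}$ and reducing to covers whose projections have endpoints off $K$. You instead make the implicit counting measure explicit: you build the natural product measure $\mu=\nu\times\cdots\times\nu$, prove the uniform Frostman bound $\nu(I)\leq C|I|^{\beta}$ (correctly handling the fact that consecutive intervals of $K_j$ need not be well separated, by using only their disjointness and the two-sided control $\tfrac12\eta^{j}\leq\ell_j\leq\eta^{j}$), and then read off both $\CH_{\alpha}(M)>0$ and the Ahlfors-type lower bound $\CH_{\alpha}(M\cap B_r(x))\geq ar^{\alpha}$ from the mass distribution principle, before invoking Lemma \ref{lemC} exactly as the paper does. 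The two proofs verify the same hypotheses of Lemma \ref{lemC}; yours trades the paper's delicate cube-counting for a single clean density estimate and is arguably more transparent, while the paper's stays closer to the classical computation in \cite{W}. The only step you leave implicit is the existence of the measure $\nu$ with $\nu(J)=N^{-j}$ on each stage-$j$ interval, which is standard since each interval of $K_j$ contains exactly $N$ disjoint intervals of $K_{j+1}$.
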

\begin{proof}
Let $M_j=K_{j_1}\times K_{j_2}\times...\times K_{j_n}$ for
$j=(j_1,j_2,...,j_n)$. Among the coverings of $M$ which compete in
the definition of $\CH_{\alpha}(M)$, (the Hausdorff measure of
$M$) are the coverings $M_j$ themselves, consisting of
$N^{j_1+j_2+...+j_n}$ cubes of volume
$\Pi_{i=1}^n(\eta_1\eta_2...\eta_{j_i})$. Since
$\eta_1\leq\eta_2\leq...<\eta$, we obtain $$ \CH_{\alpha}(M) \leq
N^{j_1}(\eta_1^{\beta}\eta_2^{\beta}...\eta_{j_1}^{\beta})...N^{j_n}(\eta_1^{\beta}\eta_2^{\beta}...\eta_{j_n}^{\beta})
\leq N^{j_1}\eta^{j_1\beta}...N^{j_n}\eta^{j_n\beta} = 1,$$
and see that the dimension of $M$ is at most $\alpha$.\\

To show that the dimension of $M$ is exactly $\alpha$, we show
that $\CH_{\alpha}(M)$ is not $0$. First we prove that $M$
satisfies the hypothesis of Lemma \ref{lemC}.\\

Let $0<r<1$ and $x\in M$, that is let $x=(x_1,x_2,...,x_n)$ where
$x_m\in K$ for all $m$. For every $m$, by construction of $K$,
there exists a smallest integer $t_m$ such that $K\cap (x_m-r,
x_m+r)$ contains at least one interval $I_{t_m}$ of length
$\eta_1..\eta_m$. Thus
\begin{equation} K\cap (x_m-r, x_m+r) \supseteq K\cap
I_{t_1}\times...\times K\cap I_{t_n}. \label{Eqn25L6}
\end{equation}

Since Hausdorff measure is translation invariant, we can assume
$2r\leq\eta_1...\eta_{t_m-1}$. Since $\alpha=n\beta,$
\begin{equation}
(2r)^{\alpha} \leq
\Pi_{m=1}^n(\eta_1^{\beta}...\eta_{t_m-1}^{\beta}).\label{Eqn25L1}
\end{equation}

 In computing the Hausdorff measure, it is
enough to take the infimum of $\Sigma d_i^{\alpha}$ over all
coverings of $M\cap B_r(x)$ by countable families of (sufficiently
small) open balls $A_i$, where the end points of the projection of
$A_i$ to $m^{th}$ axis is in the complement of $K\cap
(x_m-r,x_m+r)$. From the compactness, it is also clear that these
coverings consist of only a finite number of disjoint, open cubes.
Let $\{U_i\}$ be one such family of sufficiently small cubes that
cover $M\cap B_r(x)$, where the end points of the projection of
$U_i$ to $m^{th}$ axis is in the complement of $K\cap
(x_m-r,x_m+r)$.\\

 Let $p_{i_m}$ be the smallest integer $p$ such that $m^{th}$
projection of $U_i$ contains at least one interval of $K_p$ and
$P_i=(p_{i_1},p_{i_2},...p_{i_n})$. Then, from (\ref{Eqn25L6}),
$t_m\leq p_{i_m}$. Let
\begin{equation}
p_{i_m}=t_m+s_{i_m}\label{Eqn25L2}
\end{equation}
 Let $m^{th}$ projection of $U_i$
contain $k_i^{(m)}$ number of constituent intervals of $K_{p_m}$.
Then $U_i$ contain $k_i=\Pi_{m=1}^nk_i^{(m)}$ number of cubes of
$M_{P_i}=K_{p_{i_1}}\times...\times K_{p_{i_n}}$. Let $d_i$ denote
the diameter of $U_i$. Then
\begin{equation}
d_i^n\geq
k_i\Pi_{m=1}^n(\eta_1\eta_2...\eta_{p_{i_m}}).\label{Eqn25L5}
\end{equation}
 Let $j_m$'s be large such
that $\cup U_i$ contains $M_j\cap M\cap B_r(x)$ where
$M_j=K_{j_1}\times...\times K_{j_n}$ and $M_j\subset M_{P_i}$, for
all $i$. Then $U_i$ contains
$k_iN^{(j_1-p_{i_1}+...+j_n-p_{i_n})}$ cubes of $M_j$. By
(\ref{Eqn25L2}), $U_i$ contains
$k_iN^{(j_1-t_1-s_{i_1}+...+j_n-t_n-s_{i_n})}$ cubes of $M_j$.
However by (\ref{Eqn25L6}), $$M\cap B_r(x)\cap M_j\subseteq
M_t\subset M\cap B_r(x),$$ where $M_t=(K\cap
I_{t_1})\times...\times(K\cap I_{t_n})$. So the number of cubes of
$M_j$ covered by $\cup U_i$ is at least $N^{j_1-t_1+...+j_n-t_n}$.
Since $\sum_ik_iN^{(j_1-t_1-s_{i_1}+...+j_n-t_n-s_{i_n})}$ is the
total number of cubes of $M_j$ covered by $\cup U_i$,
\begin{equation}
\sum_ik_iN^{(j_1-t_1-s_{i_1}+...+j_n-t_n-s_{i_n})}\geq
N^{j_1-t_1+...+j_n-t_n}.\label{Eqn25L7}
\end{equation}
The equation (\ref{Eqn25L5}) implies that
\begin{eqnarray*}
d_i^{\alpha} &\geq&(k_i\Pi_{m=1}^n(\eta_1\eta_2...\eta_{p_{i_m}}))^{\beta}\\
                       &\geq& (2r)^{\alpha}(k_i\Pi_{m=1}^n(\eta_{t_m}\eta_{t_m+1}...\eta_{p_{i_m}}))^{\beta}(\text{from}\
                       (\text{\ref{Eqn25L1}}))\\
                       &\geq&(2r)^{\alpha}(k_i\Pi_{m=1}^n\eta_{t_m}\eta^{p_{i_m}-t_m}[(1-\frac{1}{(t_m+1)^2})...(1-\frac{1}{p_{i_m}^2})])^{\beta}\
                       \text{from (\ref{Eqn25LB1})}
\end{eqnarray*}
Since $\eta_m$ is an increasing sequence and by (\ref{Eqn25LB1}),
$\eta_{t_1}\eta_{t_2}...\eta_{t_n}\geq(\frac{3}{4}\eta)^n$. Fix
$C=(\frac{3}{4}\eta)^n$. Thus
\begin{eqnarray*}
d_i^{\alpha}        &\geq& C(2r)^{\alpha}\big(k_i\eta^{(p_{i_1}+...+p_{i_n}-(t_1+...t_n))}\Pi_{m=1}^n\big[(1-\frac{1}{t_m+1})(1+\frac{1}{p_{i_m}})\big]\big)^{\beta}\\
                       &\geq& C(2r)^{\alpha}\big(k_i\eta^{(p_{i_1}+...+p_{i_n}-(t_1+...t_n))}\Pi_{m=1}^n\big[\frac{1}{2}(1+\frac{1}{p_{i_m}})\big]\big)^{\beta}\\
                        &>& Cr^{\alpha}k_i^{\beta}\eta^{(p_{i_1}+...+p_{i_n}-(t_1+...t_n))\beta},
\end{eqnarray*}
From (\ref{Eqn25LB2}), we have
$$N^{(j_1+...j_n)-(p_{i_1}+...p_{i_n})}\eta^{(j_1+...j_n-t_1-...t_n)\beta}=\eta^{(p_{i_1}+...+p_{i_n}-(t_1+...t_n))}.$$
Thus
\begin{equation}
  d_i^{\alpha}\geq
                Cr^{\alpha}k_i^{\beta}N^{(j_1+...j_n)-(p_{i_1}+...p_{i_n})}\eta^{(j_1+...j_n-t_1-...t_n)\beta}.\label{Eqn25L3}
\end{equation}
Also, there exists a constant $C_{N,n}$ ($=2^n(N-1)^n$), such that
$1\leq k_i\leq C_{N,n}$ because of the choice of $p_{i_k}$. Let
$L=(C_{N,n})^{\beta-1}$. Since $0<\beta<1$,
\begin{equation}
k_i^{\beta}>Lk_i\label{Eqn25L4}
\end{equation}

 From (\ref{Eqn25L2}) and (\ref{Eqn25L4}),
summing over $i$ in (\ref{Eqn25L3}), we have
\begin{eqnarray*}
  \Sigma_i d_i^{\alpha} &\geq& CLr^{\alpha}\eta^{(j_1+...j_n-t_1-...-t_n)\beta}\Sigma_ik_iN^{(j_1+...j_n)-(t_{1}+...t_{n}+s_{i_1}+...s_{i_n})}\\
                       &\geq& CLr^{\alpha}\eta^{(j_1+...j_n-t_1-...-t_n)\beta}N^{j_1+...j_n-t_1-...-t_n}\ (\text{from}\ (\ref{Eqn25L7})) \\
                       &=& CLr^{\alpha}\ \ (\text{from}\ (\ref{Eqn25LB2})) \\
                       &>& 0
  \end{eqnarray*}
  Thus $\CH_{\alpha}(M\cap B_r(x))\geq CLr^{\alpha}$ for all $x\in M$ and $0<r<1$. Similarly we prove that $\CH_{\alpha}(M)>0$. By Lemma \ref{lemC}, $\CP^{\alpha}(M)<\infty$.
\end{proof}

\begin{rem}
As remarked by one of the referees of this paper, the set
constructed in Lemma \ref{lemEX} is fractal even if $\alpha$ is an
integer. In \cite{NA}, the authors proved the sharpness of Theorem
2 (in \cite{NA}) for any integer $\alpha \geq n/2$ by constructing
a smooth manifold $M \subset \mathbb R^n$ and $\mu$ supported on
$M$ such that the Fourier transform $f=\hat{\mu}\in
L^p(\mathbb{R}^n)$ for all $p > 2n/\alpha$. It would be
interesting to see if this can be done for all integers $\alpha$
between $0$ and $n$.
\end{rem}

\section{Applications to Wiener Tauberian Theorems}
\subsection{ $L^p$ Wiener Tauberian Theorems on $\R^n$}
\setcounter{equation}{0}

    In this section, we improve the results on $L^p$ versions of
Wiener Tauberian type theorems on $\R^n$ obtained in \cite{RS}.
Consider the motion group $M(n)=\R^n\ltimes SO(n)$ with the group
law
$$(x_1,k_1)(x_2,k_2)=(x_1+k_1x_2,k_1k_2).$$
For a function $h$ on $\R^n$ and an arbitrary $g=(y,k)\in M(n)$,
let $^gh$ be the function $^gh(x)=h(kx+y),\ x\in\R^n.$ Let
$\wh{h}$ denote the Euclidean Fourier transform of the function
$h$. For $h\in L^1\cap L^p(\R^n),\ 1\leq p\leq\infty$, let
$S=\{r>0:\wh{h}\equiv0$ on $C_r\}$, where $C_r$ is the sphere of
radius $r>0$ centered at origin in $\R^n$. Let $Y\ =\
Span\{^gh:g\in M(n)\}$. Then the main result from \cite{RS} is

\begin{thm}\label{thmRS}
\begin{enumerate}
\item If $p=1$, then $Y$ is dense in $L^1(\R^n)$ if and only if
$S$ is empty and $\wh{h}(0)\neq0.$ \item If $1<p<\frac{2n}{n+1}$,
then Y is dense in $L^p(\R^n)$ if and only if $S$ is empty. \item
If $\frac{2n}{n+1}\leq p<2$, and every point of $S$ is an isolated
point, then $Y$ is dense in $L^p(\R^n)$. \item If $2\leq
p\leq\frac{2n}{n-1}$, and $S$ is of zero measure in $\R^+$, then
$Y$ is dense in $L^p(\R^n)$. \item If $\frac{2n}{n-1}<p<\infty$,
then $Y$ is dense in $L^p(\R^n)$ if and only if $S$ is nowhere
dense.
\end{enumerate}
\end{thm}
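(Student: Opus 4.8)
The plan is to reduce the Wiener–Tauberian problem on $M(n)$ to a statement about $L^p$-functions on $\R^n$ whose Fourier transform is supported on the union of spheres $\bigcup_{r\in\ol S}C_r$, and then apply Theorem \ref{thmMY} to this support. First I would set up the standard duality: $Y=\mathrm{Span}\{{}^g h: g\in M(n)\}$ fails to be dense in $L^p(\R^n)$ if and only if there is a nonzero $\phi\in L^{p'}(\R^n)$ (with $p'$ the conjugate exponent, and for $p=1$ a nonzero $\phi\in L^\infty$) annihilating every translate and rotate of $h$, i.e. $\int_{\R^n} h(kx+y)\phi(x)\,dx=0$ for all $(y,k)\in M(n)$. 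Integrating in $k\in SO(n)$ and using that convolution with the rotation-averaged kernel relates to the spherical means, one obtains that $\wh{h}\,\wh{\phi}$ (interpreted as a distribution, after the usual smoothing/convolution reduction) is supported where $\wh h$ vanishes along full spheres; more precisely, the rotation-invariance of the annihilation condition forces $\mathrm{supp}\,\wh{\wt\phi}$ to lie inside $\{x:\wh h\equiv 0 \text{ on } C_{|x|}\}=\bigcup_{r\in S}C_r$. This is exactly the argument structure of Wiener's original proof recalled in the introduction, adapted to the $M(n)$-action; I would cite \cite{RS} for the reduction since it is carried out there.

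Once we know $\mathrm{supp}\,\wh{\wt\phi}\subseteq \bigcup_{r\in \ol S}C_r=:E$, the heart of the matter is to estimate $\CP^\alpha(E)$ for the appropriate $\alpha$ and invoke Theorem \ref{thmMY} with exponent $p' $ (note the roles of $p$ and $p'$ swap: $\phi\in L^{p'}$, so we need $p'\le 2n/\alpha$, i.e. $p\ge 2n/(2n-\alpha)$). The key geometric observations are: (i) a single sphere $C_r\subset\R^n$ is an Ahlfors–David regular $(n-1)$-set, hence by Remark \ref{RemAhlfors} and Lemma \ref{lemC} has $\CP^{n-1}(C_r)<\infty$; (ii) a finite union of spheres still has $\CP^{n-1}<\infty$ by finite subadditivity of $\CP^{n-1}$; and (iii) if $S$ (equivalently $\ol S$) is a single point or a finite set, $E$ is a finite union of spheres. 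For part (3), each point of $S$ being isolated, together with an appropriate local/compactness argument (reducing to $h$ of compact spectrum via convolution, as in the proof of Theorem \ref{thmMY}), lets us treat $S$ as locally finite and hence $E$ locally a finite union of spheres, so $\CP^{n-1}(E\cap B_R)<\infty$ for every $R$; applying Theorem \ref{thmMY} on each ball gives $\wt\phi\equiv 0$ when $p'\le \frac{2n}{n-1}$, i.e. $p\ge\frac{2n}{n+1}$, which covers the range $\frac{2n}{n+1}\le p<2$ in (3) and also $2\le p\le\frac{2n}{n-1}$ combined with the classical $1\le p'\le 2$ case of Theorem \ref{thmMY} for (4). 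Parts (1), (2) and the "only if'' directions in (1),(2),(5) are the comparatively soft directions: nonemptiness of $S$ (or of the zero set of $\wh h$ at the origin for $p=1$, or failure of nowhere-density for (5)) directly produces an obvious annihilating functional — a measure on a sphere in $S$ in cases (2),(3),(4), or an $L^\infty$ function supported on the open set where $\wh h$ vanishes on a whole spherical shell in case (5) — so density fails.

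The main obstacle I anticipate is part (3) (and the overlapping endpoint $p=\frac{2n}{n+1}$): the set $S$ of radii is only assumed to consist of isolated points, so a priori $S$ could be a countably infinite set (e.g. accumulating at $0$ or $\infty$), and then $E=\bigcup_{r\in S}C_r$ is a \emph{countable} union of spheres, for which $\CP^{n-1}$ need not be finite since the packing premeasure $P_0^{n-1}$ is not countably subadditive and $\CP^{n-1}$, while built from it, could still blow up if the spheres accumulate. The fix is the localization used already in the proof of Theorem \ref{thmMY}: convolve $\phi$ (or $h$) with a compactly supported smooth bump so that the spectrum becomes compact, whence only finitely many radii in $S$ are relevant, $E$ becomes a genuine finite union of Ahlfors–David regular $(n-1)$-sets, Lemma \ref{lemC} and finite subadditivity give $\CP^{n-1}(E)<\infty$, and Theorem \ref{thmMY} applies; letting the bump shrink recovers $\phi\equiv 0$. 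For (4) one additionally uses that a set of zero measure in $\R^+$ gives spheres covering a set of Lebesgue measure zero in $\R^n$, so $\CP^{0}$-type reasoning (the $\alpha\to 0$ or rather the classical $1\le p'\le 2$ measure-zero case) applies directly. Throughout, the interchange of the $SO(n)$-integration with the annihilation condition and the care with $p$ versus $p'$ are the only genuinely delicate bookkeeping points.
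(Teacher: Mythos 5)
A preliminary but important point: the paper does not prove Theorem~\ref{thmRS} at all. It is quoted verbatim from Rawat and Sitaram \cite{RS} as background, and the author only later reproves and strengthens part (3) (Theorem~\ref{thmMYRr} together with the remark following it). So there is no in-paper proof to compare against; what can be judged is whether your reconstruction would actually establish the five assertions. Your overall mechanism --- dualize, radialize the annihilator $\phi\in L^{p'}$ as in \cite{RS} so that $supp\ \wh{\wt{\phi}}\subseteq\bigcup_{r\in S}C_r$, then play support against integrability --- is indeed the mechanism the paper itself uses for its improvement of (3), and it does carry (3) and (4), modulo one repair. If the isolated set $S$ accumulates at $0$ (e.g.\ $r_k=k^{-1/(n-1)}$), cutting the spectrum down to a compact set does \emph{not} leave finitely many spheres, and $\CP^{n-1}\big(\bigcup_k C_{r_k}\big)=\sum_k\CP^{n-1}(C_{r_k})\gtrsim\sum_k r_k^{n-1}$ (packing measure is a Borel measure and the spheres are disjoint) can genuinely be infinite. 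You must localize to annuli $\{1/N\leq|\xi|\leq N\}$, or pair against test functions supported away from the origin, rather than merely assuming compact spectrum. Also, in (2) the annihilating functional witnessing failure of density is not ``a measure on a sphere'' but the Fourier transform $\wh{\sigma_r}$ of surface measure; it lies in $L^{p'}$ exactly because $p'>\frac{2n}{n-1}$, i.e.\ $p<\frac{2n}{n+1}$, which is where that threshold comes from.

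The genuine gap is the ``if'' direction of part (5), which your sketch does not address (you only treat its easy ``only if'' half). For $p>\frac{2n}{n-1}$ the hypothesis is merely that $S$ is nowhere dense, and a closed nowhere dense subset of $\R^+$ can have positive Lebesgue measure (a fat Cantor set), hence packing dimension $1$ and $\CP^{\alpha}(S)=\infty$ for every $\alpha<1$; correspondingly $\bigcup_{r\in S}C_r$ has positive measure in $\R^n$. Neither Theorem~\ref{thmMY} nor the Plancherel/measure-zero argument can then force the annihilator to vanish: an $L^2$ function can perfectly well be supported on a nowhere dense set of positive measure. This direction requires a different tool --- in \cite{RS} it rests on the fact that $p'<\frac{2n}{n+1}$ places $\phi$ in the range where $\wh{\phi}$ has well-defined restrictions to spheres, which depend continuously on the radius and vanish for the dense set of radii outside $S$. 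As it stands, your packing-measure framework proves (3) and (4) (and the soft halves of (1), (2), (5)) but cannot yield (5), so the proposal is incomplete as a proof of the full theorem.
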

We prove that the part (3) of the above theorem can be improved:
\begin{thm}\label{thmMYRr}
Let $f\in L^1(\mathbb{R}^n)\cap L^p(\mathbb{R}^n)$ and let
$S=\{r>0:\wh{f}\equiv0$ on $C_r\}$ be such that
$\CP^{\beta}(S)<\infty,$ for some $0\leq\beta<1$. If
$\frac{2n}{n+1-\beta}\leq p\leq 2$, then $Y\ =\ Span\{^gf:g\in
M(n)\}$ is dense in $L^p(\mathbb{R}^n)$.
\end{thm}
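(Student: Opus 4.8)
The plan is to pass to the dual problem and reduce it to \thmref{thmMY}. Since $1\le p\le 2<\infty$, the span $Y$ is dense in $L^p(\R^n)$ if and only if the only $\phi\in L^{p'}(\R^n)$, $\tfrac1p+\tfrac1{p'}=1$, satisfying
\[
\int_{\R^n}f(kx+y)\,\phi(x)\,dx=0\qquad\text{for every }(y,k)\in M(n)
\]
is $\phi\equiv 0$; these integrals converge because $f\in L^1\cap L^p$ puts each $x\mapsto f(kx+y)$ in $L^p(\R^n)$. So I would fix such a $\phi$, suppose $\phi\not\equiv 0$, and aim for a contradiction. For each fixed $k\in SO(n)$ the change of variables $z=kx$ turns the displayed identity into $f*\psi_k\equiv 0$, where $\psi_k(w)=\phi(-k^{-1}w)\in L^{p'}(\R^n)$. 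Since $f\in L^1(\R^n)$, the Wiener Tauberian argument recalled in the introduction (local invertibility in the $L^1$ convolution algebra near points where $\wh f\neq 0$, which applies equally to $\psi_k\in L^{p'}$) gives $\operatorname{supp}\wh{\psi_k}\subseteq\{\wh f=0\}$; and since $\wh{\psi_k}(\xi)=\wh\phi(-k^{-1}\xi)$, this reads $\operatorname{supp}\wh\phi\subseteq-k^{-1}\{\wh f=0\}$ for every $k\in SO(n)$. Intersecting over $k$ and using that $SO(n)$ acts transitively on each sphere $C_r$, a point $\eta$ survives the intersection precisely when $\wh f\equiv 0$ on $C_{|\eta|}$, i.e.\ when $|\eta|\in S$; hence
\[
\operatorname{supp}\wh\phi\ \subseteq\ E:=\{\eta\in\R^n:\ |\eta|\in S\}.
\]

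The key step is to transfer the measure hypothesis from $S$ to $E$. Put $\alpha:=n-1+\beta$, so $0\le\alpha<n$ precisely because $0\le\beta<1$. Viewing $\R^n\setminus\{0\}$ in polar coordinates as $(0,\infty)\times\mathbb{S}^{n-1}$, the set $E$ is the spherical lift of $S$, and I would show $\CP^{\alpha}(E\cap\overline{B_M(0)})<\infty$ for every $M>0$; this is all that is needed, since in the proof of \thmref{thmMY} the only set on which finiteness of the packing measure is used is the bounded set $\operatorname{supp}\wh\phi\cap\operatorname{supp}\psi$ (for $\psi\in C_c^\infty$), and a global bound $\CP^{\alpha}(E)<\infty$ need not hold when $S$ is unbounded. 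To obtain the bounded bound: a packing cover $S\cap(0,M]=\bigcup_iA_i$ with $A_i\subseteq(0,M]$ and $\sum_iP_0^{\beta}(A_i)<\infty$ (available from $\CP^{\beta}(S)<\infty$) lifts to the cover $\bigcup_i\{\eta:|\eta|\in A_i\}$ of $E\cap\overline{B_M(0)}$, and since on the bounded radial range $(0,M]$ the radial Jacobian $r^{n-1}$ is bounded while $\mathbb{S}^{n-1}$ is Ahlfors--David regular of dimension $n-1$, a lifting (product-type) estimate for packing pre-measures should give $P_0^{\alpha}(\{\eta:|\eta|\in A_i\})\le C_{n,M}\,P_0^{\beta}(A_i)$, whence $\sum_iP_0^{\alpha}(\{\eta:|\eta|\in A_i\})<\infty$. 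Equivalently one may avoid packing measure of $E$ altogether: since $\{\eta:d(\eta,E)<\epsilon\}=\{\eta:|\eta|\in S(\epsilon)\}$, integrating the radial Jacobian over $(0,M)$ and applying \lemref{lemMY} \emph{in $\R$} to the bounded set $S\cap(0,M)$ shows $\limsup_{\epsilon\to 0}|S'(\epsilon)|\,\epsilon^{\alpha-n}<\infty$ for every bounded $S'\subseteq E$, which is exactly the input consumed inside the proof of \thmref{thmMY}.

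With this in hand, \thmref{thmMY} applies to $\phi\in L^{p'}(\R^n)$, whose Fourier transform is supported in a set of (locally) finite packing $\alpha$-measure with $0\le\alpha=n-1+\beta<n$, provided $p'\le\tfrac{2n}{\alpha}=\tfrac{2n}{\,n-1+\beta\,}$ (after the harmless reduction, as in the proof of \thmref{thmMY}, to $\phi\in L^{2n/\alpha}$ by convolving with a $C_c^{\infty}$ bump, which does not enlarge $\operatorname{supp}\wh\phi$). Writing $p'=\tfrac{p}{p-1}$, the inequality $p'\le\tfrac{2n}{n-1+\beta}$ is equivalent to $p\ge\tfrac{2n}{n+1-\beta}$, while $p\le 2$ is simply $p'\ge 2$, and the interval $\big[\tfrac{2n}{n+1-\beta},2\big]$ is nonempty exactly because $\beta<1$. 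Thus the hypotheses of \thmref{thmMY} are met, so $\phi\equiv 0$, contradicting $\phi\not\equiv 0$; hence $Y$ is dense in $L^p(\R^n)$.

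I expect the main obstacle to be the second paragraph: showing that the spherical lift $E$ inherits the (local) finite packing $\alpha$-measure of $S$ with the exponent shifted by $n-1$. The delicate point is that the zero set $S$ may be unbounded, so that the radial Jacobian $r^{n-1}$ is unbounded on it; this is dealt with by passing to bounded pieces and, there, either lifting a packing cover or reducing to the one-dimensional \lemref{lemMY}. The remaining ingredients — the duality reformulation and the Wiener Tauberian localisation of $\operatorname{supp}\wh\phi$ — are routine and already present in the paper.
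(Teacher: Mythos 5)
Your proposal is correct and follows essentially the same route as the paper: dualize, localize $\operatorname{supp}\wh{\phi}$ to the spherical lift of $S$, transfer the packing $\beta$-measure hypothesis to an $\epsilon$-neighbourhood estimate with exponent $1-\beta=n-\alpha$ for $\alpha=n-1+\beta$ via polar coordinates, and rerun the mollifier argument of Theorem~\ref{thmMY}. Your second option for the transfer step (applying Lemma~\ref{lemMY} in $\R$ to the bounded piece of $S$ and integrating the radial Jacobian, rather than proving $\CP^{\alpha}$-finiteness of the lift) is exactly the computation the paper carries out inline, and your observation that only the bounded-set input of Theorem~\ref{thmMY} is needed matches why the paper redoes the estimate rather than citing that theorem as a black box.
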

\begin{proof}
    Fix $\epsilon<1$. Suppose $Y$ is not dense in $L^p(\mathbb{R}^n)$. Let $h\in
L^q(\mathbb{R}^n)$ annihilate all the elements in $Y$, where
$\frac{1}{p}+\frac{1}{q}=1$. We can assume $h$ to be smooth,
bounded and radial (see the arguments in \cite{RS}). It follows
that $h*f\equiv0$. Then $supp$ $\wh{h}$ is contained in the zero
set of $\wh{f}$. Let $\alpha$ be such that $2\leq
q=\frac{2n}{\alpha}\leq\frac{2n}{n-1+\beta}$. Choose an even
function $\chi \in C_{c}^{\infty}(\R^{n}) $ with support in the
unit ball and $\int_{\mathbb{R}^{n}}\chi(x) dx = 1$. Let
$\chi_{\epsilon}(x) = \epsilon^{-n}\chi(x/\epsilon)$ and
$u_{\epsilon}=u\ast\chi_{\epsilon}$ where $u=\wh{h}$. Since $2\leq
q$, as in Theorem \ref{thmMY},
  \begin{eqnarray*}
    \|u_{\epsilon}\|^{2} &\leqslant& C\epsilon^{\alpha-n}\sum_{j=-\infty}^{\infty}
    a_{j}b_{j}^{\epsilon},\\
    \text{where}\ a_{j} &=& 2^{j(n-\alpha)} \sup_{2^{j}\leqslant|x|\leqslant2^{j+1}}
    |\wh{\chi}(x)|^{2}\\
    \text{and}\ b_{j}^{\epsilon} &=& (2^{-j}\epsilon)^{n-\alpha}\int_{2^{j}\leqslant|\epsilon x|\leqslant2^{j+1}}|h(x)|^{2}dx.
  \end{eqnarray*}
  and $\sum\limits_{j=-\infty}^{\infty}a_{j}b_{j}^{\epsilon} \rightarrow 0$ as $ \epsilon \rightarrow 0$.\\

  Let $\psi\in C_c^{\infty}(\mathbb{R}^n)$. Let $M=\ supp\ \wh{h}\ \cap\ supp\ \psi$ and
  let $R_{\psi}>0$ be such that $M$ is contained in a ball of radius $R_{\psi}$.
  For $x\in M$, $\|x\|\in S$ and $\|x\|\leq R_{\psi}$. Let $S_{\psi}=\{r\in S: r\leq R_{\psi}\}$.
  Then $S_{\psi}$ is a bounded subset of $S$. With similar arguments in Lemma \ref{lemMY}, we prove that
  $\underset{\epsilon\rightarrow0}{lim}\ \epsilon^{\beta-1}\int_{M_{\epsilon}}|\psi(x)|^2dx<\infty$:\\

  Since $\CP^{\beta}(S_{\psi})\leq\CP^{\beta}(S)<\infty$,
  let $\{A_i\}$ be a cover of $S_{\psi}$ such that $\sum_iP_0^{\beta}(A_i)<\infty$. Then $P_0^{\beta}(A_i\cap S_{\psi})<\infty$.
  For $S^i_{\psi}=A_i\cap S_{\psi}$, let $P(S^i_{\psi},\epsilon)$ be the maximum number of disjoint balls with centers $\{r_j\}$ in $S^i_{\psi}$, of
  radius $\epsilon$ and $N(S^i_{\psi},\epsilon)$ be the $\epsilon$-covering number of $S^i_{\epsilon}$. Then
  $$S^i_{\psi}\subseteq\cup_{j=1}^{N(S^i_{\psi},\epsilon)}(r_j-\epsilon/2,r_j+\epsilon/2)\
  \text{and}$$
  $$S_{\psi}(\epsilon)\subset\cup_iS^i_{\psi}(\epsilon)\subseteq\cup_i\cup_{j=1}^{N(S^i_{\psi},\epsilon)}(r_j-\epsilon,r_j+\epsilon).$$
  If $x\in M(\epsilon)$, then $\|x\|\in S_{\psi}(\epsilon)$. We have,
  \begin{eqnarray*}
    \int_{M(\epsilon)}|\psi(x)|^2dx &\leq & \int_{r\in S_{\psi}(\epsilon)}\int|\psi(r\omega)|^2d\omega r^{n-1} dr \\
      &\leq & (R_{\psi}+\epsilon)^{n-1} \int_{r\in S_{\psi}(\epsilon)}\int|\psi(r\omega)|^2d\omega dr \\
      &\leq & (R_{\psi}+1)^{n-1} \|\psi\|^2_{\infty}\Omega_n \sum_i\sum_{j=1}^{N(S^i_{\psi},\epsilon)}\int_{r_j-\epsilon}^{r_j+\epsilon} dr  \\
      &=& C_1\sum_iN(S^i_{\psi},\epsilon)(2\epsilon) \\
      &\leq & 2C_1 \epsilon \sum_iP(S^i_{\psi},\epsilon/2)\ \ \ \text{(by lemma
      \ref{lemPM})}
  \end{eqnarray*}
  where $C_1=(R_{\psi}+1)^{n-1} \|\psi\|^2_{\infty}\Omega_n $ is a constant independent of $\epsilon$ and $\Omega_n$ is the volume
  of the unit sphere in $\mathbb{R}^n$. Thus,
  $$\underset{\epsilon\rightarrow0}{lim}\ \epsilon^{\beta-1}\int_{M(\epsilon)}|\psi(x)|^2dx\leq
  2C_1 \sum_i\underset{\epsilon\rightarrow0}{lim}\ \epsilon^{\beta} P(S^i_{\psi},\epsilon/2)\leq 2C_1\sum_iP_0^{\beta}(A_i)<\infty.$$
  Hence,
  \begin{eqnarray*}
    |<u,\psi>|^2 &=& \underset{\epsilon\rightarrow0}{lim}\ |<u_{\epsilon},\psi>|^2 \\
      &\leq& \underset{\epsilon\rightarrow0}{lim}\ \|u_{\epsilon}\|_2^2\ \int_{M_{\epsilon}}|\psi|^2 \\
      &\leq& C\underset{\epsilon\rightarrow0}{lim}\ \epsilon^{\alpha-n}\sum_{j=-\infty}^{\infty} a_{j}b_{j}^{\epsilon}\int_{M_{\epsilon}}|\psi(x)|^2dx \\
      &\leq& C\underset{\epsilon\rightarrow0}{lim}\ \epsilon^{\alpha-n-\beta+1}\epsilon^{\beta-1}\sum_{j=-\infty}^{\infty} a_{j}b_{j}^{\epsilon}\int_{M_{\epsilon}}|\psi(x)|^2dx \\
      &\leq& CC_1\underset{\epsilon\rightarrow0}{lim}\ \sum_{j=-\infty}^{\infty} a_{j}b_{j}^{\epsilon}\\
      &=&0,
  \end{eqnarray*}
  since $2\leq\frac{2n}{\alpha}\leq\frac{2n}{n-1+\beta}$, that is $0\leq\alpha-n-\beta+1$. Hence $h=0$.
\end{proof}

\begin{rem}
Suppose every point of $S$ is an isolated point. Convolving $f$
with an arbitrary Schwartz class function whose Fourier transform
is compactly supported, we may assume that $S$ is finite. The case
$\beta=0$ in the above theorem then implies part (3) of Theorem
\ref{thmRS}.
\end{rem}

Now let $f$ be an integrable function in $L^1\cap L^p(\R)$ and let
$F$ denote the closed set where the Fourier transform of $f$
vanishes. In \cite{B}, A. Beurling proved that if for some $p$ in
$(1,2)$, the space of finite linear combinations of translates of
$f$ is not dense in $L^p(\R)$, then the Hausdorff dimension of $F$
is at least $2-(2/p)$ (see also page 312 in \cite{W}). In other
words, if the Hausdorff dimension of $F$ is $\alpha$, for
$0\leq\alpha\leq1$, then the space of finite linear combinations
of translates of $f$ is dense in $L^p(\R)$ for
$2/(2-\alpha)<p<\infty$. Now using Theorem \ref{thmMY}, we prove a
similar result (including the end points for the range) on $\R^n$
where Hausdorff dimension is replaced with the packing dimension.

\begin{thm}\label{thmMYRB}
Let $f\in L^1(\mathbb{R}^n)\cap L^p(\mathbb{R}^n)$ for
$\frac{2n}{2n-\alpha}\leq p<\infty$ and let the zero set of
$\wh{f}\subseteq E,$ where $\CP^{\alpha}(E)<\infty$ for some
$0\leq\alpha<n$. Then $X=span\{ ^xf: x\in\mathbb{R}^n\}$ is dense in
$L^p(\mathbb{R}^n)$.
\end{thm}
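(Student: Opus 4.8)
The plan is to argue by duality exactly as in the proof of Theorem \ref{thmMYRr}, using Theorem \ref{thmMY} as the key input. Suppose $X$ is not dense in $L^p(\mathbb R^n)$; then there is a nonzero $h\in L^q(\mathbb R^n)$, with $\frac1p+\frac1q=1$, annihilating every translate $^xf$. Testing the annihilation against translates gives $\tilde h * f = 0$ (with $\tilde h(t)=h(-t)$), hence $\wh{\tilde h}\,\wh f = 0$ as tempered distributions, so $supp\ \wh{\tilde h}$ is contained in the zero set of $\wh f$, which by hypothesis lies in $E$ with $\CP^{\alpha}(E)<\infty$. Since $\frac{2n}{2n-\alpha}\le p<\infty$, the conjugate exponent satisfies $1<q\le\frac{2n}{\alpha}$. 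Now $\tilde h\in L^q(\mathbb R^n)$ and $supp\ \wh{\tilde h}\subseteq E$, so Theorem \ref{thmMY} (whose hypothesis $p\le \frac{2n}{\alpha}$ is exactly $q\le\frac{2n}{\alpha}$ here, and which also covers the endpoint) forces $\tilde h\equiv 0$, hence $h\equiv 0$, a contradiction. Therefore $X$ is dense.

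First I would record the reduction to the functional-analytic statement: $X$ dense in $L^p$ iff the only $h\in L^q$ with $\int {}^xf(y)h(y)\,dy=0$ for all $x$ is $h=0$; this is the Hahn--Banach / Riesz representation step, valid for $1<p<\infty$ since then $(L^p)^*=L^q$ with $1<q<\infty$ (note $q<\infty$ is what lets us identify the annihilator, and $q>1$ comes from $p<\infty$). Second I would verify carefully that $\int {}^xf(y)h(y)\,dy=(\tilde h * f)(x)$ and that this vanishing for all $x$ gives $\tilde h*f\equiv 0$ pointwise, hence $0$ in $\mathcal S'$; taking Fourier transforms yields $\wh{\tilde h}\cdot \wh f=0$, and the standard distribution-theoretic fact (quoted in the introduction) gives $supp\ \wh{\tilde h}\subseteq\{\wh f=0\}\subseteq E$. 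Third, translate the exponent condition: $p\ge \frac{2n}{2n-\alpha}$ is equivalent to $q=\frac{p}{p-1}\le \frac{2n}{\alpha}$, so Theorem \ref{thmMY} applies to $\tilde h$ with the same $E$ and $\alpha$, giving $\tilde h=0$.

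The only real subtlety, and hence the step I would write out most carefully, is the passage $\tilde h*f\equiv 0 \Rightarrow \tilde h = 0$: this is precisely where Theorem \ref{thmMY} is invoked, and one must check that $\tilde h*f$ is genuinely the zero distribution (using $f\in L^1$ so that convolution with $h\in L^q$ is well defined and the identity $\widehat{\tilde h * f}=\widehat{\tilde h}\,\widehat f$ holds in $\mathcal S'$), and that $\tilde h\in L^q(\mathbb R^n)$ with the right support hypothesis so that Theorem \ref{thmMY} delivers $\tilde h\equiv 0$. Everything else is routine duality. I would also remark that this recovers, on $\R^n$ and including the endpoint $p=2/(2-\alpha)$, the one-dimensional theorem of Beurling mentioned above, with Hausdorff dimension replaced by the (generally larger) packing-measure hypothesis.
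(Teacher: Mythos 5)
Your proposal is correct and follows essentially the same route as the paper: dualize via $(L^p)^*=L^q$, observe that an annihilating $h\in L^q$ satisfies $\tilde h*f\equiv 0$ so that $supp\ \wh{\tilde h}$ lies in the zero set of $\wh f\subseteq E$, convert $p\geq \frac{2n}{2n-\alpha}$ into $q\leq\frac{2n}{\alpha}$, and invoke Theorem \ref{thmMY} to conclude $h\equiv 0$. The only cosmetic difference is that the paper additionally reduces to a smooth radial annihilator (citing \cite{RS}), which your argument does not need.
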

\begin{proof}
Suppose $X$ is not dense in $L^p(\R^n)$. Then there exists a non
trivial, smooth and radial $h\in L^q(\mathbb{R}^n)$ such that $h *
f_1 \equiv 0$ for all $f_1\in X$(see the arguments in \cite{RS}).
Clearly the zero set of $X(\subset L^1(\mathbb{R}^n))$,
$\underset{u\in X}{\cap}\{s\in\mathbb{R}^n:\wh{u}(s)=0\}$ is equal
to the zero set of $\wh{f}$, $Z(\wh{f})$. Hence $supp\
\wh{h}\subseteq Z(\wh{f})$. Since $\frac{2n}{2n-\alpha}\leq
p<\infty$, we have $1< q\leq \frac{2n}{\alpha}$.
By Theorem \ref{thmMY}, $h=0$. Thus $X$ is dense in $L^p(\mathbb{R}^n)$.\\
\end{proof}

In \cite{He}, C. S Herz studied the versions of $L^p$- Wiener
Tauberian theorems. From Theorem 1 and Theorem 4 of \cite{He}, we
note that for $f\in L^1\cap L^p(\R^n),\ p<\infty$ the alternative
sufficient conditions for the translates of $f$ to span $L^p$ are,
\begin{enumerate}
\item $|K(\epsilon)|=o(\epsilon^{n(1-2/q)})$ for each compact
subset $K$ of $E$. \item dim $E=\alpha<2n/q$, with the proviso, if
$n>2$, that $q\leq 2n/(n-2)$.
\end{enumerate} where $E$ denotes the zero set of $\wh{f}$ and $\frac{1}{p}+\frac{1}{q}=1$.
With an additional hypothesis on $E$, using Theorem \ref{thmMYRB}, we can
improve the result in \cite{He}:

\begin{prop} For $f\in
L^1\cap L^p(\R^n),\ 1\leq p<\infty$ a sufficient condition that
the translates of $f$ span $L^p$ is : the zero set of $\wh{f}$ has
finite packing $\alpha$- measure for $\alpha\leq 2n/q$ where
$\frac{1}{p}+\frac{1}{q}=1$.
\end{prop}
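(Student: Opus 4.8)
The plan is to reduce the statement directly to Theorem~\ref{thmMYRB}, which already handles the range $\frac{2n}{2n-\alpha}\le p<\infty$ for the zero set of $\wh f$ contained in a set of finite packing $\alpha$-measure. First I would translate the hypothesis: we are given that the zero set $E$ of $\wh f$ satisfies $\CP^{\alpha}(E)<\infty$ for some $\alpha\le 2n/q$, where $\tfrac1p+\tfrac1q=1$. The relation $\alpha\le 2n/q$ is equivalent to $q\ge 2n/\alpha$, which in turn (passing to conjugate exponents) is exactly $p\le \frac{2n}{2n-\alpha}$. Combined with the standing assumption $p\ge 1$, we are precisely in the regime $1\le p\le \frac{2n}{2n-\alpha}$, so in particular $\frac{2n}{2n-\alpha}\le p$ fails to be strict only at the endpoint, which Theorem~\ref{thmMYRB} explicitly includes. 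Hence the translates of $f$ span $L^p(\R^n)$ by that theorem, provided we also know $0\le\alpha<n$; if $\alpha\ge n$ the hypothesis $\CP^{\alpha}(E)<\infty$ with $E$ a nonempty subset of $\R^n$ forces $\alpha$ to be treated as the ambient case and the conclusion is classical (for $p\le 2$, $\wh f$ supported on a null set already forces $f\equiv 0$, so its translates trivially span; the interesting content is $\alpha<n$).

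The main steps in order: (1) rewrite $\alpha\le 2n/q$ as $p\le \frac{2n}{2n-\alpha}$ via the identity $q=\frac{p}{p-1}$, checking the elementary algebra $\frac{1}{q}\ge\frac{\alpha}{2n}\iff \frac{p-1}{p}\ge\frac{\alpha}{2n}\iff p\big(1-\frac{\alpha}{2n}\big)\le 1\iff p\le\frac{2n}{2n-\alpha}$; (2) observe that the finite packing $\alpha$-measure hypothesis on the zero set of $\wh f$ is exactly the hypothesis ``$Z(\wh f)\subseteq E$ with $\CP^\alpha(E)<\infty$'' required by Theorem~\ref{thmMYRB} (take $E$ to be the zero set itself); (3) invoke Theorem~\ref{thmMYRB} to conclude $X=\mathrm{span}\{{}^xf:x\in\R^n\}$ is dense in $L^p(\R^n)$; (4) dispose of the degenerate case $p=1$ separately — there $q=\infty$ and the condition reads $\alpha\le 0$, i.e. $\alpha=0$, and $\CP^0(E)<\infty$ means $E$ is finite, so $\wh f$ vanishes only at finitely many points; then $f\in L^1$ with nowhere-dense zero set and Wiener's classical $L^1$ Tauberian theorem (or again Theorem~\ref{thmMY} with $\alpha=0$, giving $f\not\equiv 0$ forces the argument through) yields density.

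The only genuine subtlety — and it is minor — is bookkeeping about endpoints and the conjugate exponent of $p=1$. For $1<p<\infty$ the reduction is purely formal once the inequality in step (1) is checked, since Theorem~\ref{thmMYRB} was deliberately stated to include the endpoint $p=\frac{2n}{2n-\alpha}$. I expect no real obstacle here; the proposition is essentially a restatement of Theorem~\ref{thmMYRB} phrased in Herz's language of the exponent $q$ and the dimension bound $\alpha<2n/q$, with the packing-measure hypothesis replacing Herz's density/dimension conditions. The one place to be careful is to note that Herz's list is a disjunction of \emph{sufficient} conditions, so improving it means showing our packing hypothesis \emph{implies} density, not that it is equivalent to his conditions; that implication is exactly what the chain (1)$\Rightarrow$(3) provides.
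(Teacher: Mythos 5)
Your overall strategy --- reading the Proposition as a direct corollary of Theorem~\ref{thmMYRB} --- is exactly what the paper intends; the paper offers nothing beyond the phrase ``using Theorem~\ref{thmMYRB}''. However, your step (1), which is the only substantive step, contains a sign error that reverses the whole reduction. From $\alpha\le 2n/q$ with $\alpha,q>0$ one gets $q\le 2n/\alpha$, not $q\ge 2n/\alpha$; and the chain you wrote, $\frac{p-1}{p}\ge\frac{\alpha}{2n}\iff p\bigl(1-\frac{\alpha}{2n}\bigr)\le 1$, is false: multiplying $\frac{p-1}{p}\ge\frac{\alpha}{2n}$ by $p>0$ gives $p-1\ge \frac{\alpha p}{2n}$, i.e.\ $p\bigl(1-\frac{\alpha}{2n}\bigr)\ge 1$, hence $p\ge\frac{2n}{2n-\alpha}$. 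So the hypothesis $\alpha\le 2n/q$ is equivalent to $p\ge\frac{2n}{2n-\alpha}$, which is precisely the range of Theorem~\ref{thmMYRB}, and the reduction is immediate with no endpoint discussion needed. As written, you instead place $p$ in $\bigl[1,\frac{2n}{2n-\alpha}\bigr]$ and then assert that the theorem still applies because its hypothesis ``fails to be strict only at the endpoint''; that is not a valid inference --- if your range were correct, the theorem would cover only the single value $p=\frac{2n}{2n-\alpha}$ and the argument would collapse for every smaller $p$. Your proof survives only because the correct algebra happens to land exactly on the theorem's hypothesis.

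Two smaller points. The case $p=1$ needs no separate treatment: there $q=\infty$ forces $\alpha=0$, and Theorem~\ref{thmMYRB} already covers $\alpha=0$ and $p\ge\frac{2n}{2n-0}=1$, so invoking Wiener's classical theorem is superfluous. Your aside about $\alpha\ge n$ is also muddled: you argue about $\wh f$ being supported on a null set, but the hypothesis concerns the zero set of $\wh f$, not its support. The Proposition must simply be read with the restriction $0\le\alpha<n$ inherited from Theorem~\ref{thmMYRB}; for $\alpha>n$ the condition $\CP^{\alpha}(E)<\infty$ is vacuous for subsets of $\R^n$ and the statement would be false as literally written.
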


\subsection{ $L^p$ Wiener Tauberian Theorem on $M(2)$}
\setcounter{equation}{0}

In this section, we look at one sided and two sided analogues of
Wiener Tauberian Theorems on $M(2)$ and improve a few
results from \cite{NR}.\\

The group $M(2)$ is the semi-direct product of $\C$ with the
special orthogonal group $K=SO(2)$. The group law in $G=M(2)$ is
given by \bee (z,e^{i\alpha})(w,e^{i\beta})=(z+e^{i\alpha}w,
e^{i(\alpha+\beta)}). \eee The Haar measure on $G$ is given by
$dg=dzd\alpha$ where $dz$ is the Lebesgue measure on $\C$ and
$d\alpha$ is the normalized Haar measure on $S^1$. For each
$\lambda>0$, we have a unitary irreducible representation of $G$
realized on $H=L^2(K)=L^2([0,2\pi],dt)$, given by
$$[\pi_{\lambda}(z,e^{it})u](s)=e^{i\lambda<z,e^{is}>}u(s-t),$$
for $(z,e^{it})\in G$ and $u\in H$. Here $<z,w>=$Re$z.\bar{w}$. It
is known that these are all the infinite dimensional, non
equivalent unitary irreducible representations of $G$. Apart from
the above family, we have another family $\{\chi_n,n\in \Z\}$,
where $\Z$ is the set of integers, of one dimensional unitary
representations of $G$, given by
$\chi_n(z,e^{i\alpha})=e^{in\alpha}$. Then the unitary dual
$\wh{G}$, of $G$ is the collection
$\{\pi_{\lambda},\lambda>0\}\cup\{\chi_n:n\in\Z\}$ (see page 165,
\cite{S}).\\

For $f\in L^1(G)$, define the "group theoretic" Fourier transform
of $f$ as follows:
$$\pi_{\lambda}(f)=\int_Gf(g)\pi_{\lambda}(g)dg,\ \lambda>0$$ and
$$\chi_n(f)=\int_Gf(z,e^{i\alpha})e^{-in\alpha}dzd\alpha,\
n\in\Z.$$From the Plancherel theorem for $G$ (see page 183,
\cite{S}) we have for $f\in L^2(G)$,
$$\|f\|^2_2=\int_0^{\infty}\|\pi_{\lambda}(f)\|^2_{HS}\lambda
d\lambda,$$ where $\|.\|_{HS}$ denotes the Hilbert-Schmidt norm.\\

For $g_1,g_2\in G$, the two sided translate, $^{g_1}f^{g_2}$ of
$f$ is the function defined by $^{g_1}f^{g_2}(g)=f(g_1^{-1}gg_2).$
For $f\in L^1(G)\cap L^p(G)$, let $S=\{a>0:\pi_a(f)=0\},\ X=Span\
\{^{g_1}f^{g_2}:g_1,g_2\in G\},\ S'=\{\lambda>0:$ Range of
$\pi_{\lambda}(f)$ is not dense$\}$ and $V_f$ be the closed
subspace spanned by the right translates of $f$ in $L^p(G)$.
\begin{thm}
Let $f\in L^1(G)\cap L^p(G)$. \begin{enumerate} \item For
$\frac{4}{3-\alpha}\leq p<2$, if $S=\{a>0: \pi_a(f)=0\}$ is such
that $\CP^{\alpha}(S)<\infty$ for $0\leq\alpha<1$, then $X=span\{
^{g_1}f^{g_2}: g_1, g_2\in M(2)\}$ is dense in $L^p(M(2))$. \item
If $f$ is radial in the $\mathbb{R}^2$ variable and
$\CP^{\alpha}(S')<\infty$ for some $0\leq\alpha<1$, then
$V_f=L^p(M(2))$ provided $\frac{4}{3-\alpha}\leq p\leq 2$.
\end{enumerate}
\end{thm}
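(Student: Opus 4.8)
The idea is to mimic, on the group $G = M(2)$, exactly the argument that was used for $\R^n$ in Theorem~\ref{thmMYRr}, transporting everything to the Euclidean Fourier transform on $\C = \R^2$ via the structure of the representations $\pi_\lambda$. First I would set up the duality: suppose $X$ (resp. $V_f$) is not dense in $L^p(G)$; then there is a nonzero $h \in L^q(G)$, $1/p+1/q=1$, annihilating all two-sided translates of $f$ (resp. all right translates), and by the standard averaging/regularization arguments quoted from \cite{RS}, \cite{NR} one may take $h$ to be smooth, bounded, and $K$-bi-invariant (resp. radial in the $\C$-variable). The annihilation condition forces, on the Fourier transform side, that $\pi_\lambda(h) = 0$ for every $\lambda \notin S$ (resp. for every $\lambda$ outside $S'$); since $h$ is $K$-bi-invariant, $\pi_\lambda(h)$ is determined by a single scalar-valued function, essentially the Euclidean Fourier transform of a radial function on $\R^2$ restricted to the circle of radius $\lambda$. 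Concretely, writing $h$ through its ``$\R^2$-profile'', the matrix coefficient $\langle \pi_\lambda(h) e_0, e_0\rangle$ equals (a constant times) $\wh{h_0}$ evaluated on $C_\lambda \subset \R^2$, where $h_0$ is the relevant radial section of $h$ and $\wh{\ }$ is the Euclidean Fourier transform on $\R^2$. Thus $\supp \wh{h_0} \subseteq \{r\omega : r \in S,\ \omega \in S^1\}$ (resp. with $S'$), a set of the form $S \cdot S^1 \subset \R^2$.

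Next I would invoke the dimension input. Since $\CP^\alpha(S) < \infty$ with $0 \le \alpha < 1$, the ``polar'' set $\wt S = \{r\omega : r\in S,\ \omega\in S^1\} \subset \R^2$ has $\CP^{\alpha+1}(\wt S) < \infty$: covering $S$ by sets $A_i$ with $\sum P_0^\alpha(A_i)$ finite and intersecting with a fixed annulus, one checks via Lemma~\ref{lemPM}(1)--(3) that for a bounded piece, $P_\epsilon^{\alpha+1}(\wt S_i)$ is controlled by $\epsilon \cdot P_\epsilon^\alpha(S_i)$ up to a constant — the extra factor of $\epsilon$ against the extra dimension of the angular variable — exactly the one-variable ``tube'' estimate already carried out inside the proof of Theorem~\ref{thmMYRr}. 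Hence $\wt S$ carries finite packing $(\alpha+1)$-measure in $\R^2$. Now apply Theorem~\ref{thmMY} on $\R^n$ with $n=2$ and the packing exponent $\alpha+1$: the exponent threshold is $2n/(\alpha+1) = 4/(\alpha+1)$. The hypothesis $\frac{4}{3-\alpha} \le p \le 2$ gives $2 \le q \le \frac{4}{1+\alpha}$, i.e.\ $q \le 4/(\alpha+1)$, so $h_0 \in L^q(\R^2)$ with $\wh{h_0}$ supported in a set of finite packing $(\alpha+1)$-measure and $q$ below the critical index — Theorem~\ref{thmMY} forces $h_0 \equiv 0$. Doing this for each relevant $K$-type (or directly for the single radial profile in part (2), and for the bi-$K$-invariant profile in part (1)) yields $\pi_\lambda(h) = 0$ for all $\lambda$, hence $h = 0$ by Plancherel for $G$, the desired contradiction.

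For part (2) the bookkeeping is slightly cleaner because $f$ radial in the $\R^2$-variable makes $\pi_\lambda(f)$ (and the annihilator $h$) diagonal in the exponential basis of $L^2(K)$, so ``Range of $\pi_\lambda(f)$ not dense'' translates to the vanishing of a single scalar symbol on $C_\lambda$, and ``$V_f = L^p(G)$'' is exactly the statement that no nonzero $h\in L^q$ right-annihilates $f$; the computation above then applies verbatim with $S'$ in place of $S$. For part (1), the two-sided translates and the quoted reduction from \cite{NR} let one assume $h$ is $K$-bi-invariant, reducing again to one scalar profile per the same mechanism.

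The main obstacle I anticipate is the representation-theoretic dictionary in the middle step: carefully identifying the annihilation of two-sided (resp. right) $G$-translates of $f$ with an \emph{$L^q$} statement about a genuine function on $\R^2$ whose Euclidean Fourier transform is supported on $S\cdot S^1$, with the correct passage of norms — one must make sure the regularized $h$ is actually $L^1\cap L^q$ on $G$, that its $\R^2$-profile inherits an $L^q(\R^2)$ bound (a Minkowski/integration-in-$K$ argument), and that $\supp\wh{h_0}$ is controlled by $S$ and not merely by its closure. Once that bridge is in place, the measure-theoretic comparison $\CP^\alpha(S)<\infty \Rightarrow \CP^{\alpha+1}(S\cdot S^1)<\infty$ and the appeal to Theorem~\ref{thmMY} are routine, following the template already executed in Theorem~\ref{thmMYRr}.
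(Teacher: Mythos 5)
Your plan for part (2) is essentially the paper's proof: dualize, decompose the annihilator into $K$-isotypic pieces $\phi_m$ on $\R^2$, observe that $\wh{\phi}_m$ is supported in the polar set $\{z:\|z\|\in S'\}$, and kill each piece by the tube estimate from Theorem~\ref{thmMYRr}. For part (1), however, the paper takes a genuinely different route that sidesteps the step you yourself flag as the main obstacle: instead of dualizing on $G$ and trying to reduce the annihilator $h\in L^q(G)$ to a $K$-bi-invariant profile, it works on the \emph{primal} side, building for each $a\notin S$ a finite combination $F_a=\sum c_j f^{x_j^{-1}}$ of right translates with $\pi_a(F_a)v_0\neq 0$, averaging over $K$ to get $F_a^{\#}\in L^1\cap L^p(\R^2)$ with $\wh{F_a^{\#}}\not\equiv 0$ on $C_a$, and then invoking Theorem~\ref{thmMYRr} as a black box to get $L^p(G/K)\subseteq\ol{X}$, which by the argument of \cite{NR} upgrades to $\ol{X}=L^p(G)$. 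This avoids having to justify that a nonzero annihilator survives $K\times K$-averaging (it need not: averaging can annihilate $h$), and avoids making sense of $\pi_\lambda(h)$ for $h\in L^q$, $q\geq 2$ --- note in particular that your closing appeal to ``Plancherel for $G$'' is not available for $q>2$; the correct conclusion is that each isotypic profile vanishes by the Euclidean theorem, whence $h=0$ by Fourier series in the compact variable.

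Two concrete points in your route need repair. First, the reduction of the two-sided annihilation condition to statements about scalar $L^q(\R^2)$-profiles with Fourier support in $S\cdot S^1$ is exactly the nontrivial bridge; you would have to run it for every $K\times K$-type (not just the bi-invariant one) and verify the $L^q(\R^2)$ membership of each profile, which is the content you are importing from \cite{NR} and \cite{RS} without proof. Second, the claim $\CP^{\alpha}(S)<\infty\Rightarrow\CP^{\alpha+1}(S\cdot S^1)<\infty$ is not established in the paper and is not a consequence of Lemma~\ref{lemPM} alone: a packing of a product set uses balls of many different radii whose projections need not be disjoint, so a genuine product theorem for packing (pre)measures would be required to literally invoke Theorem~\ref{thmMY}. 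The paper avoids this by never forming $\CP^{\alpha+1}(S\cdot S^1)$: in Theorem~\ref{thmMYRr} it proves directly the only fact actually used, namely the Minkowski-type tube bound $\limsup_{\epsilon\to 0}\epsilon^{\alpha-1}\int_{M(\epsilon)}|\psi|^2<\infty$, via $N(S_\psi^i,\epsilon)\leq P(S_\psi^i,\epsilon/2)\leq \epsilon^{-\alpha}P_\epsilon^{\alpha}(S_\psi^i)$. If you replace your product-measure step by that direct estimate (running the $\langle u_\epsilon,\psi\rangle$ argument rather than citing Theorem~\ref{thmMY} as a black box), your numerology $q\leq 4/(1+\alpha)$ is correct and the argument closes.
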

\begin{proof}
To prove part (1), we proceed as in the proof of Theorem 2.1 in \cite{NR}.
It is enough to prove $L^p(G/K)\subseteq\ol{X}$.\\

For given $a,\epsilon>0$, there exists constants
$c_1,c_2,...,c_m$, $w\in H$ and elements $x_1,x_2,...x_m\in\ G$
such that $\|\sum_{j=1}^{m}c_j\pi_a(x_j)v_0-w\|<\epsilon$, where
$v_0$ is $K$-fixed vector $v_o\equiv1\in H$. Define
$F_a=\sum_{j=1}^mc_jf^{x_j^{-1}}$. Then $\pi_a(F_a)v_0\neq0$. Let
$$F_a^{\#}(x)=\int_KF_a(xk)dk,\ x\in G.$$ Then whenever $\pi_a(f)\neq0$,
as in the proof of Theorem 2.1 in \cite{NR} we have a right
$K$-invariant function $F_a^{\#}$ which can be considered as a
function on $\R^2$, that is $F_a^{\#}\in L^1(\R^2)\cap L^p(\R^2)$
such that its Euclidean Fourier transform is not identically zero on the sphere $C_a=\{x\in\R^2:\|x\|=a\}.$\\

Define $S_1=\cap_{a\in S^c}\{r>0:\wh{F}_a^{\#}\equiv0$ on $C_r\}$.
Then $S_1\subset S$. We have $\overline{Span\{^gF_a^{\#}:g\in
G,a\in S_1^c\}}\subseteq\overline{Span\{^{g_1}f^{g_2}:g_1,g_2\in
G\}}$. Also using Theorem \ref{thmMYRr}, $\overline{Span\{^gF_a^{\#}:g\in
G,a\in S_1^c\}}=L^p(G/K)$. Thus
$L^p(G/K)\subseteq\overline{Span\{^{g_1}f^{g_2}:g_1,g_2\in
G\}}=\ol{X}$.\\

To prove part(2), we proceed as in the proof of (c) of Theorem 3.2
in \cite{NR}. Let $\phi(z)e^{im_0\alpha}\in L^q\cap
L^{\infty}(M(2))$ kill all the functions in $V_f$ where
$\frac{1}{p}+\frac{1}{q}=1$. Then $f$ being radial in the
$\R^2$-variable we are led to the convolution equation
$f_m*_{\R^2}\phi_m=0$ where $\phi_m$ is defined by
\begin{equation*}
    \phi_m(z)=\int_0^{2\pi}\phi(e^{i\alpha}z)e^{i(m_0+m)\alpha}d\alpha.
\end{equation*}
and $f_m$ is defined by
\begin{equation*}
    f_m(z)=\int_{S^1}f(z,e^{i\alpha})e^{-im\alpha}d\alpha
\end{equation*}

Taking Fourier transform we obtain that $supp\ \wh{\phi}_m$ is
contained in $\{z\in\mathbb{R}^2:\|z\|\in S\}$. Proceeding as in the proof of Theorem \ref{thmMYRr},
we have $<\phi_m,\psi>=0$ for all $\psi\in C_c^{\infty}(\R^2)$ and $m$. Thus $\phi_m\equiv0$ for all $m$.\\
\end{proof}


\section{Acknowledgements}
The author wishes to thank both the referees for several
suggestions and remarks which improved the presentation of the
paper. The author wishes to thank Prof. Malabika Pramanik for
useful discussions. The author is also grateful to Dr. E. K.
Narayanan for his constant encouragement and for the many useful
discussions during the course of this work. The author also wishes
to thank UGC-CSIR for financial support. This work is supported in
part by UGC Centre for Advanced Studies.

\end{document}